\documentclass[11pt]{article}
\usepackage{amsmath,amssymb,amsthm,graphicx,subfigure,float,url}
\usepackage[colorlinks=true,citecolor=red,filecolor=red,urlcolor=magenta]{hyperref}
\usepackage{pdfsync}
\usepackage{stmaryrd}
\usepackage{mathrsfs}
\setlength{\topmargin}{-10mm}
\setlength{\textheight}{220mm}
\setlength{\oddsidemargin}{-5mm}
\setlength{\textwidth}{170mm}

\title{}

\author{}

%\date{}

\newcommand{\ep} {\varepsilon}
\newcommand{\om} {\omega}
\newcommand{\al} {\alpha}

\newcommand{\Om} {\Omega}
\newcommand{\la} {\lambda}

\newcommand{\N} {\mathbb{N}}

\newcommand{\C} {\mathbb{C}}
\newcommand{\R} {\mathbb{R}}

\renewcommand{\geq}{\geqslant}
\renewcommand{\leq}{\leqslant}
\newtheorem{theorem}{Theorem}  
\newtheorem{proposition}{Proposition}
\newtheorem{corollary}{Corollary}
\newtheorem{definition}{Definition}
\newtheorem{lemma}{Lemma}

\theoremstyle{definition}\newtheorem{remark}{Remark}
\newcommand{\Hun}{\mathbf{(H)}}

\newcommand{\Op}{\mathrm{Op}}

\date{}

\begin{document}
\title{Observability properties of the homogeneous wave equation on a closed manifold}
\author{Emmanuel Humbert\footnote{Laboratoire de Math\'ematiques et de Physique Th\'eorique, UFR Sciences et Technologie, Facult\'e Fran\c cois Rabelais, Parc de Grandmont, 37200 Tours, France (\texttt{emmanuel.humbert@lmpt.univ-tours.fr}).}
\and Yannick Privat\footnote{Sorbonne Universit\'es, CNRS UMR 7598, UPMC Univ Paris 06, Laboratoire Jacques-Louis Lions, F-75005, Paris, France (\texttt{yannick.privat@upmc.fr}).}
\and Emmanuel Tr\'elat\footnote{Sorbonne Universit\'es, UPMC Univ Paris 06, CNRS UMR 7598, Laboratoire Jacques-Louis Lions, F-75005, Paris, France (\texttt{emmanuel.trelat@upmc.fr}).}}
\maketitle

\begin{abstract}
We consider the wave equation on a closed Riemannian manifold. We observe the restriction of the solutions to a measurable subset $\omega$ along a time interval $[0, T]$ with $T>0$. It is well known  that, if $\omega$ is open and if the pair $(\omega,T)$ satisfies the Geometric Control Condition then an observability inequality is satisfied, comparing the total energy of solutions to their energy localized in $\omega \times (0, T)$. The observability constant $C_T(\om)$ is then defined as the infimum over the set of all nontrivial solutions of the wave equation of the ratio of localized energy of solutions over their total energy.  

In this paper, we provide estimates of the observability constant based on a low/high frequency splitting procedure allowing us to derive general geometric conditions guaranteeing that the wave equation is observable on a measurable subset $\omega$. 
We also establish that, as $T\rightarrow+\infty$, the ratio $C_T(\om)/T$ converges to the minimum of two quantities: the first one is of a spectral nature and involves the Laplacian eigenfunctions; the second one is of a geometric nature and involves the average time spent in $\omega$ by Riemannian geodesics.
\end{abstract}

\noindent\textbf{Keywords:} wave equation, observability inequality, geometric control condition.
 
\tableofcontents
\section{Introduction}
%\subsection{Framework}
Let $(\Omega,g)$ be a compact connected Riemannian manifold of dimension $n$ without boundary. The canonical Riemannian volume on $\Om$ is denoted by $v_g$, inducing the canonical measure $dv_g$. Measurable sets are considered with respect to the measure $dv_g$.

Consider the wave equation
\begin{equation}\label{waveEqobs}
\partial_{tt}y- \triangle_g y=0\qquad \textrm{in }(0,T)\times \Omega
\end{equation}
where $\triangle_g$ stands for the usual Laplace-Beltrami operator on $\Omega$ for the metric $g$. Recall that the Sobolev space $H^1(\Omega)$ as the completion of the vector space of $C^\infty$ functions having a bounded gradient (for the Riemannian metric) in $L^2(\Omega)$ for the norm given by 
$\Vert u\Vert_{H^1}^2=\Vert u\Vert_{L^2}^2+\Vert \nabla u\Vert_{L^2}^2$ and that $H^{-1}(\Omega)$ is the dual space of $H^1(\Omega)$ with respect to the pivot space $L^2(\Omega)$. %It is understood that $H^1(\Omega)$ is endowed with the $\Vert\cdot\Vert_{H^1}$ norm whereas the dual space $(H^1)'$ is endowed with the usual topology on dual spaces.

For every set of initial data $(y(0,\cdot),\partial_ty(0,\cdot))\in L^2(\Omega)\times H^{-1}(\Omega)$, there exists a unique solution $y\in \mathcal{C}^0(0,T;L^2(\Omega))\cap \mathcal{C}^1(0,T;H^{-1}(\Omega))$ of \eqref{waveEqobs}.

Let $T>0$ and let $\omega$ be an arbitrary measurable subset of $\Omega$ of positive measure. The notation $\chi_\omega$ stands for the characteristic function of $\omega$, in other words the function equal to $1$ on $\omega$ and $0$ elsewhere.
The \textbf{observability constant} in time $T$ associated to \eqref{waveEqobs} is defined by 
\begin{equation}\label{defCT}
C_T(\om) =  \inf \left\{ J_T^{\om}(y^0,y^1)  \ \mid\ (y^0,y^1) \in L^2(\Omega)\times H^{-1}(\Omega) \setminus\{(0,0)\} \right\}
\end{equation}
where 
\begin{equation}\label{defJTom}
J_T^{\om}(y^0,y^1) = \frac{\int_0^T\int_\om \vert y(t,x) \vert ^2 \, dv_g \, dt}{\Vert (y^0,y^1)\Vert_{L^2\times H^{-1}}^2}.
\end{equation}
In other words, $C_T(\om)$ is the largest possible nonnegative constant $C$ such that
\begin{equation*}%\label{ineqobsw}
C \Vert (y^0,y^1)\Vert_{L^2\times H^{-1}}^2
\leq \int_0^T\int_\omega \vert y(t,x)\vert^2 \,dv_g(x) \, dt
\end{equation*}
for all $(y^0,y^1)\in L^2(\Omega)\times H^{-1}(\Omega)$ such that $(y(0,\cdot),\partial_ty(0,\cdot))=(y^0,y^1)$. 
The equation \eqref{waveEqobs} is said to be \textit{observable} on $\omega$ in time $T$ if $C_T(\om)>0$.
Note that, by conservation of energy, we always have $0\leq C_T(\om)\leq T$.
It is well known that if $\om$ is an open set then observability holds when the pair $(\omega,T)$ satisfies the \textit{Geometric Control Condition} in $\Omega$ (see \cite{BardosLebeauRauch,BurqGerard,rauch-taylor}), according to which every ray of geometric optics that propagates in $\Omega$ intersects $\omega$ within time $T$. 
This classical result will be slightly generalized to more general subsets $\omega$ within this paper. 
Let us also mention the recent article \cite{laurent-leautaud} where the authors provide sharp estimates of the observability constant at the minimal time at which unique continuation holds for the wave equation. 

This article is devoted to establishing various properties of the observability constant. 
Our main results are stated in Section \ref{sec:mainres}. We first show that, under appropriate assumptions on the observation domain $\omega$, the limit of $C_T(\om)/T$ as $T\to +\infty$ exists, is finite and is written as the minimum of two quantities: the first one is a {\it spectral quantity} involving the eigenfunctions of $-\triangle_g$ and the second one is a {\it geometric quantity} involving the geodesics of $\Om$. 
We then provide a characterization of observability (Corollary \ref{obser_carac})  based on a low/high frequency splitting  procedure (Theorem \ref{main_finite_time}) showing how observability can be characterized in terms of high-frequency eigenmodes. In turn, our approach gives a new proof of  
results of \cite{BardosLebeauRauch,rauch-taylor} on observability.
Finally, we investigate the case where there is a spectral gap assumption on the spectrum of $-\triangle_g$.

\section{Statement of the results}\label{sec:mainres}
Let $T >0$ and let $\omega$ be a measurable subset of $\Omega$.

Let $(\phi_j)_{j\in \N^*}$ be an arbitrary Hilbert basis of $L^2(\Omega)$ consisting of eigenfunctions of $-\triangle_g$, associated with the real eigenvalues $(\lambda_j^2)_{j\in\N^*}$ such that $0<\lambda_1\leq\lambda_2\leq\cdots\leq\lambda_j\rightarrow+\infty$.
For every $N \in\N$, we define 
\begin{multline}\label{CTgeqN1}
C_T^{> N}(\om)  = \inf  \{J_T^{\om}(y^0,y^1)\mid \langle y^i,\phi_j\rangle_{(H^i)',H^i}=0, \quad \forall i=0,1, \quad \forall j= 1,\ldots,N\\
(y^0,y^1) \in L^2(\Omega)\times H^{-1}(\Omega) \setminus\{(0,0)\} \}  
\end{multline} 
with the convention that $H^0=L^2$. Noting that $C_T(\om)\leq C_T^{> N}(\om)\leq C_T^{> N+1}(\om)$ for every $N\in\N$, we define the ``high-frequency'' observability constant as follows.

\begin{definition}[high-frequency observability constant]
The {\em high-frequency observability constant} $\alpha^T(\om)$ is defined by
 $$
 \alpha^T(\om) = \lim_{N  \to + \infty}  \frac{1}{T} C_T^{> N}(\om).
 $$
\end{definition}

This limit exists since the mapping $\N\ni N  \mapsto C_T^{> N}(\om)$ is nondecreasing and is bounded\footnote{This follows by conservation of the energy $[0,T]\ni t\mapsto \Vert \partial_t y(t,\cdot )\Vert^2_{L^2(\Omega)}+\Vert\nabla y(t,\cdot)\Vert^2_{L^2(\Omega)}$ for any solution $y$ of \eqref{waveEqobs}.\label{consNRJ}}. 

\begin{definition}[Spectral quantity $g_1(\om)$]
The \textbf{spectral quantity} $g_1(\om)$ is defined by 
\begin{equation*}%\label{def:g1}
g_1(\om) = \inf_{\phi\in\mathcal{E}}  \frac{\int_{\om}  |\phi(x)|^2 \, dv_g}{\int_{\Omega} |\phi(x)|^2 \, dv_g},
\end{equation*}
where the infimum runs over the set $\mathcal{E}$ of all nonconstant eigenfunctions $\phi$ of  $-\triangle_g$. 
\end{definition}

%\noindent The two  main results of the paper are

\begin{center}
\fbox{\bf Main results on the observability constant $C_T(\om)$}
\end{center}

\begin{theorem}\label{main_finite_time}
We have
\begin{equation*}%\label{eq1thmain_finite_time}
 \frac{C_T(\om)}{T}  \leq \min \left( \frac{1}{2} g_1(\om), \alpha^T(\om) \right).
\end{equation*}
Moreover, if $\frac{C_T(\om)}{T} <  \alpha^T(\om)$ then the infimum in the definition of  $C_T$ is reached:  there exists $(y^0,y^1)\in L^2(\Omega)\times H^{-1}(\Omega) \setminus\{(0,0)\}$ such that 
$$
\frac{C_T(\om)}{T} = J_T^{\om}(y^0,y^1) > 0. 
$$ 
\end{theorem}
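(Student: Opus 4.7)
For the first inequality, observe that the bound $C_T(\omega)/T \leq \alpha^T(\omega)$ is immediate from the definitions: imposing orthogonality to the first $N$ eigenmodes shrinks the admissible class, so $C_T(\omega) \leq C_T^{>N}(\omega)$ for every $N$, and letting $N \to +\infty$ yields the claim. The bound $C_T(\omega)/T \leq \frac{1}{2} g_1(\omega)$ follows from testing $J_T^\omega$ on eigenmode solutions. Given a nonconstant eigenfunction $\phi$ with eigenvalue $\lambda^2 > 0$, the complex-valued solution $y(t,x) = e^{i\lambda t}\phi(x)$ has initial data $(y^0, y^1) = (\phi, i\lambda\phi)$, so $\|(y^0, y^1)\|_{L^2 \times H^{-1}}^2 = \|\phi\|_{L^2}^2 + \lambda^2 \|\phi\|_{H^{-1}}^2 = 2\|\phi\|_{L^2}^2$. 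Since $|y(t,x)|^2 = |\phi(x)|^2$, the Rayleigh quotient $J_T^\omega$ equals exactly $\frac{T}{2}\int_\omega |\phi|^2\, dv_g / \|\phi\|_{L^2}^2$. Splitting $y$ into real and imaginary parts gives $C_T(\omega) \leq J_T^\omega(y)$, and taking the infimum over $\phi \in \mathcal{E}$ produces the bound.

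For the attainment statement, the plan is a concentration--compactness argument that uses the high-frequency constant to rule out loss of mass in the weak limit. Take a minimizing sequence with $\|(y_k^0, y_k^1)\|_{L^2 \times H^{-1}} = 1$, so $\int_0^T \int_\omega |y_k|^2\, dv_g\, dt \to C_T(\omega)$. By Banach--Alaoglu, extract a weakly convergent subsequence $(y_k^0, y_k^1) \rightharpoonup (y^0, y^1)$; boundedness and linearity of the solution operator yield $y_k \rightharpoonup y$ weakly in $L^2((0,T) \times \Omega)$. Weak lower semicontinuity gives $\|(y^0, y^1)\|^2 \leq 1$ and $\int_0^T \int_\omega |y|^2 \leq C_T(\omega)$, so attainment reduces to showing $\|(y^0, y^1)\| = 1$.

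Suppose for contradiction $\varepsilon := 1 - \|(y^0, y^1)\|^2 > 0$, and set $z_k = y_k - y$. The Hilbert-space identity $\|a-b\|^2 = \|a\|^2 - 2\Real\langle a,b\rangle + \|b\|^2$ combined with the weak convergence yields $\|(z_k^0, z_k^1)\|^2 \to \varepsilon$, $(z_k^0, z_k^1) \rightharpoonup 0$, and $\int_0^T \int_\omega |z_k|^2 \to C_T(\omega) - \int_0^T \int_\omega |y|^2$. Now decompose $z_k = z_k^{\leq N} + z_k^{>N}$ using the spectral projector onto $\mathrm{span}(\phi_1, \ldots, \phi_N)$. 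Since this projector has finite-dimensional range, the weak convergence of $(z_k^0, z_k^1)$ to zero upgrades to strong convergence of the low-frequency part, both in initial data and in $L^2((0,T) \times \Omega)$; hence $\|(z_k^{>N, 0}, z_k^{>N, 1})\|^2 \to \varepsilon$ and, via Cauchy--Schwarz on the cross term, $\int_0^T \int_\omega |z_k^{>N}|^2 - \int_0^T \int_\omega |z_k|^2 \to 0$. Applying the definition of $C_T^{>N}(\omega)$ to $z_k^{>N}$ and passing to the limit first in $k$, then in $N$, produces
$$C_T(\omega) - \int_0^T \int_\omega |y|^2 \, dv_g \, dt \geq T\, \alpha^T(\omega)\, \varepsilon.$$
Combining this with the trivial bound $\int_0^T \int_\omega |y|^2 \geq C_T(\omega)(1-\varepsilon)$ (which holds by definition of $C_T$, trivially if $(y^0, y^1) = 0$) forces $C_T(\omega)/T \geq \alpha^T(\omega)$, contradicting the hypothesis. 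The main obstacle I anticipate is the clean bookkeeping of the low/high frequency splitting in the limit $k \to +\infty$: this is precisely where the weak-to-strong upgrade from finite-dimensionality of the low-frequency range is essential to decouple the cross terms and transfer mass to the high-frequency regime where $\alpha^T(\omega)$ can be used.
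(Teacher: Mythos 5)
Your proof is correct and follows essentially the same route as the paper: the upper bound is obtained exactly as in Lemma \ref{obvious} (eigenmode test solutions for the $g_1$ bound, monotonicity $C_T(\om)\leq C_T^{>N}(\om)$ for the $\alpha^T$ bound), and the attainment is the concentration--compactness argument of Proposition \ref{compacity} --- normalized minimizing sequence, weak limit, splitting of the norm and of the functional, low/high frequency decomposition of the weakly vanishing remainder, and application of $C_T^{>N}(\om)$ to the high-frequency part. The only cosmetic difference is that you argue by contradiction on the lost mass $\varepsilon$, whereas the paper concludes directly via the elementary inequality $\frac{a+b}{c+d}\geq\min\left(\frac{a}{c},\frac{b}{d}\right)$.
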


%It is interesting to note that Theorem \ref{main_finite_time} yields an explicit characterization of the positiveness of $C_T(\om)$. 
In what follows we are going to provide explicit estimates of $\alpha^T(\om)$, thus yielding observability properties. 

\begin{corollary}
 \label{obser_carac}
We have $C_T(\om) >0$ if and only if $\alpha^T(\om)>0$.
\end{corollary}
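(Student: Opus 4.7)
The plan is to deduce this corollary directly from Theorem \ref{main_finite_time} combined with one elementary monotonicity observation. The forward implication is essentially trivial, so the whole argument should be short; there is no genuine obstacle beyond correctly handling the two cases arising from Theorem \ref{main_finite_time}.

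First I would establish the easy direction: assume $C_T(\om) > 0$. The infimum defining $C_T^{> N}(\om)$ is taken over a subset of the initial data considered in the definition of $C_T(\om)$ (namely, those whose first $N$ Fourier coefficients with respect to the basis $(\phi_j)$ vanish), and the functional $J_T^\om$ is the same. Restricting an infimum to a smaller admissible set can only increase its value, hence $C_T^{>N}(\om) \geq C_T(\om)$ for every $N \in \mathbb{N}$. Passing to the limit gives $\alpha^T(\om) \geq C_T(\om)/T > 0$.

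For the reverse implication, assume $\alpha^T(\om) > 0$ and distinguish two cases according to the position of $C_T(\om)/T$ relative to $\alpha^T(\om)$. In the first case, $C_T(\om)/T \geq \alpha^T(\om)$, so $C_T(\om) \geq T \alpha^T(\om) > 0$ directly. In the second case, $C_T(\om)/T < \alpha^T(\om)$, and this is precisely the hypothesis under which the second part of Theorem \ref{main_finite_time} applies: the infimum in the definition of $C_T(\om)$ is attained by some nonzero $(y^0,y^1) \in L^2(\Omega)\times H^{-1}(\Omega)$, and the theorem asserts in addition that $C_T(\om)/T = J_T^\om(y^0,y^1) > 0$. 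Either way we conclude $C_T(\om) > 0$.

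The main point to highlight is that the nontrivial content is entirely packaged into Theorem \ref{main_finite_time}: once the attainment statement (with strict positivity of the minimizing ratio) is known, the corollary is a two-line case distinction. No additional spectral or geometric input is required.
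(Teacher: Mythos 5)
Your forward implication is correct and coincides with the paper's: $C_T(\om)\leq C_T^{>N}(\om)$ for every $N$, hence $\alpha^T(\om)\geq C_T(\om)/T$. The reverse implication, however, has a genuine gap: it is circular. The entire weight of your Case 2 rests on the clause $J_T^{\om}(y^0,y^1)>0$ appearing in the statement of Theorem \ref{main_finite_time}, but that clause is precisely the hard direction of Corollary \ref{obser_carac} in disguise, and it is not delivered by the proof of Theorem \ref{main_finite_time}. The paper derives that theorem from Lemma \ref{obvious} and Proposition \ref{compacity}; the latter is a concentration--compactness argument which proves only that the infimum defining $C_T(\om)$ is \emph{attained} when $C_T(\om)/T<\alpha^T(\om)$ --- it says nothing about the value of the minimum. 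Attainment alone does not give positivity: a nontrivial minimizer with $J_T^{\om}=0$ is exactly an ``invisible solution'', i.e.\ a nonzero solution of \eqref{waveEqobs} vanishing a.e.\ on $(0,T)\times\om$, and such solutions do exist in general (for instance by finite propagation speed when $T$ is small and $\om$ is far from filling $\Om$). Ruling them out under the sole hypothesis $\alpha^T(\om)>0$ is the actual content of the corollary, and your argument never addresses it; the ``$>0$'' in the theorem statement is only justified a posteriori by the corollary itself.

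For comparison, the paper's proof of the reverse implication argues by contradiction from $C_T(\om)=0<\alpha^T(\om)$. Proposition \ref{compacity} then produces one nontrivial invisible solution, and an induction on $k$ (property $(H_k)$) upgrades this to nontrivial invisible solutions on $[0,T-\ep]\times\om$ involving only frequencies of index larger than $k$, for every $k$: one takes suitable linear combinations of time-translates $\tau_s(y)$ to kill the lowest surviving mode, using crucially that a nonzero eigenfunction of $-\triangle_g$ cannot vanish on a set of positive measure (unique continuation for nodal sets). This forces $C_{T-\ep}^{>N}(\om)=0$ for all $N$, hence $\alpha^{T-\ep}(\om)=0$, and the estimate $\alpha^{T}(\om)\leq\alpha^{T-\ep}(\om)+\ep/(T-\ep)$ then gives $\alpha^T(\om)=0$, the desired contradiction. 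None of this machinery --- time translation, the low-frequency elimination, the unique continuation input --- can be bypassed by quoting the statement of Theorem \ref{main_finite_time}.
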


Note that this result is valid for any Lebesgue measurable subset $\omega$ of $\Omega$ and for any $T >0$. Corollary \ref{obser_carac} says that observability is a high-frequencies property, which was already known when inspecting the proofs of GCC in \cite{BardosLebeauRauch,LRLTT}, but the above equivalence with the notion of high-frequency observability constant, was never stated like that, up to our knowledge.
Besides, our objective is also to investigate what happens for measurable subsets $\om$ that are not open.

\begin{remark}
The results established in \cite{BardosLebeauRauch} are valid for manifolds having a nonempty boundary. Corollary \ref{obser_carac} above is still true in this context but extending the results hereafter to such geometries would require a deeper study of $\alpha^T(\om)$ on manifolds with boundary, which are beyond the scope of this paper 
\end{remark}

As a consequence of our techniques of proof, which are based on a concentration-compactness argument, we get the following large-time asymptotics of the observability constant $C_T(\om)$. 

\begin{theorem}[Large-time observability] \label{main_long_time}
The limit
$$
\alpha^\infty(\om)= \lim_{T \to +\infty} \alpha^T(\om)
$$ 
exists and we have
\begin{equation}\label{eq1thmain_long_time}
\lim_{T \to +\infty} \frac{C_T(\om)}{T} =  \min \left( \frac{1}{2} g_1(\om),\alpha^\infty(\om) \right) .
\end{equation}
Moreover, if $\frac{1}{2}g_1(\om) < \alpha^\infty(\om) $ then $g_1(\om)$ is reached, i.e., the infimum in the definition of $g_1(\om)$ is in fact a minimum.
\end{theorem}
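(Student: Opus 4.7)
My plan has three stages: existence of $\alpha^\infty(\om)$ together with the easy upper bound, a frequency-splitting lower bound, and the attainment statement for $g_1$ as a byproduct. The main obstacle, addressed last, is controlling a cross term mixing low and high frequencies.

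\textbf{Existence of $\alpha^\infty$ and the upper bound.} By time-translation invariance and conservation of the $L^2\times H^{-1}$ norm under the wave flow, $T\mapsto C_T^{>N}(\om)$ is superadditive: splitting a solution at time $T_1$ produces admissible data at $T_1$ whose projection on the first $N$ modes still vanishes, whence $C_{T_1+T_2}^{>N}(\om)\geq C_{T_1}^{>N}(\om)+C_{T_2}^{>N}(\om)$. Fekete's lemma yields $\alpha^{N,\infty}:=\lim_T C_T^{>N}(\om)/T=\sup_T C_T^{>N}(\om)/T$. Since $N\mapsto C_T^{>N}/T$ is nondecreasing, $\alpha^T\leq\sup_N\alpha^{N,\infty}=:\alpha^\infty$, whereas $\liminf_T\alpha^T\geq\alpha^{N,\infty}$ for each $N$; hence $\lim_T\alpha^T=\alpha^\infty$. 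Theorem~\ref{main_finite_time} then gives $\limsup_{T\to\infty}C_T(\om)/T\leq\min(\tfrac12 g_1(\om),\alpha^\infty(\om))$, while the existence of $\lim_T C_T(\om)/T$ follows from the same Fekete argument applied to $C_T$.

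\textbf{Lower bound by frequency splitting.} Take $T_n\to\infty$ with $C_{T_n}/T_n\to\ell:=\lim_T C_T/T$. If $\ell\geq\alpha^\infty$ there is nothing to prove, so assume $\ell<\alpha^\infty$; then eventually $C_{T_n}/T_n<\alpha^{T_n}$, and Theorem~\ref{main_finite_time} yields normalized minimizers $(y_n^0,y_n^1)$. Choose $N$ with $\lambda_{N+1}>\lambda_N$ (always possible since the spectrum is unbounded) and decompose $y_n=y_n^{\leq N}+y_n^{>N}$ along the first $N$ eigenmodes, with mass split $(\rho_n^N)^2+(\sigma_n^N)^2=1$. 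After extraction $\rho_n^N\to\rho^N$, $\sigma_n^N\to\sigma^N$, and $(y_n^{0,\leq N},y_n^{1,\leq N})$ converges in the finite-dimensional low-frequency subspace. Because $y_n^{\leq N}$ is almost-periodic in $t$ with the finitely many frequencies $\pm\lambda_1,\ldots,\pm\lambda_N$, a direct time-averaging computation (resonant pairs contribute a factor $T_n$, nonresonant ones $O(1)$ uniformly in $n$) gives
\begin{equation*}
\frac{1}{T_n}\int_0^{T_n}\!\!\int_\om |y_n^{\leq N}|^2\,dv_g\,dt\;\longrightarrow\;\int_\om\sum_\lambda\bigl(|Y_\lambda^+|^2+|Y_\lambda^-|^2\bigr)\,dv_g\;\geq\;\tfrac12 g_1(\om)(\rho^N)^2,
\end{equation*}
where each $Y_\lambda^\pm$ is a nonconstant eigenfunction of $-\triangle_g$ for the eigenvalue $\lambda^2$ extracted from the limit data, and the last inequality combines $\int_\om|\phi|^2\geq g_1(\om)\|\phi\|^2$ with the identity $\sum_\lambda(\|Y_\lambda^+\|^2+\|Y_\lambda^-\|^2)=\tfrac12(\rho^N)^2$. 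The high-frequency piece gives $\frac{1}{T_n}\int_0^{T_n}\!\!\int_\om |y_n^{>N}|^2\geq(C_{T_n}^{>N}/T_n)(\sigma_n^N)^2\to\alpha^{N,\infty}(\sigma^N)^2$. Granted that the cross term is $o(1)$, one concludes $\ell\geq\tfrac12 g_1(\om)(\rho^N)^2+\alpha^{N,\infty}(\om)(\sigma^N)^2\geq\min(\tfrac12 g_1,\alpha^{N,\infty})$; letting $N\to\infty$ closes the lower bound.

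\textbf{Cross term and attainment of $g_1$.} The cross term $\frac{2}{T_n}\Real\int_0^{T_n}\!\!\int_\om y_n^{\leq N}\bar y_n^{>N}\,dv_g\,dt$ is the technical heart of the argument. Expanded in the eigenbasis it is a double sum over $k\leq N<j$ of time integrals $\int_0^{T_n}e^{i(\pm\lambda_k\pm\lambda_j)t}\,dt$ of magnitude $O\bigl(1/(\lambda_{N+1}-\lambda_N)\bigr)$, weighted by $\int_\om\phi_k\phi_j\,dv_g$. A naive $\ell^1$ bound would require summability of $1/\lambda_j^2$, which fails as soon as $\dim\Omega\geq 2$ by Weyl's law; the fix is the Parseval-type identity $\sum_j\bigl(\int_\om\phi_k\phi_j\,dv_g\bigr)^{\!2}=\int_\om\phi_k^2\,dv_g\leq 1$, which together with Cauchy--Schwarz and the norm bound on the data yields a cross-term of order $O\bigl(\sqrt N/((\lambda_{N+1}-\lambda_N)T_n)\bigr)$, vanishing as $n\to\infty$ for fixed $N$. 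Finally, when $\tfrac12 g_1(\om)<\alpha^\infty(\om)$, combining $\ell=\tfrac12 g_1$ with $\ell\geq\tfrac12 g_1(\rho^N)^2+\alpha^\infty(\sigma^N)^2$ forces $\sigma^N=0$ for every $N$; the minimizing sequence is therefore purely low-frequency in the limit, and equality in $\int_\om|Y_\lambda^\pm|^2\geq g_1\|Y_\lambda^\pm\|^2$ is forced for every nonzero $Y_\lambda^\pm$, exhibiting a minimizer of $g_1(\om)$.
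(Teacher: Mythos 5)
Your argument is correct, but it takes a genuinely different route from the paper's. The paper runs a concentration--compactness argument: it extracts the weak $L^2\times L^2$ limit $Y_\infty$ of a minimizing sequence, splits $J_{T_k}(Y_k)=J_{T_k}(Y_\infty)+J_{T_k}(Z_k)+\mathrm{o}(1)$ using the weak convergence of $Z_k=Y_k-Y_\infty$ together with the pointwise convergence of the averaged operators $\bar A_{T}(\chi_\om)$ to the diagonal operator $\bar A_\infty$ (Lemma~\ref{convAB}), bounds the $Y_\infty$ term below by $g_1(\om)\Vert Y_\infty\Vert^2$, and handles $J_{T_k}(Z_k)$ by slicing $[0,T_k]$ into windows of fixed length $s$ and then discarding the first $N$ modes (which vanish strongly for $Z_k$). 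You instead split by frequency at a threshold $N$ with $\lambda_{N+1}>\lambda_N$: the low block is treated by finite-dimensional compactness plus an explicit non-resonant time average, the high block is compared directly to $C_{T_n}^{>N}(\om)/T_n$, and the low/high cross term is killed by oscillation together with the Bessel identity $\sum_j\bigl(\int_\om\phi_k\phi_j\,dv_g\bigr)^2=\int_\om\phi_k^2\,dv_g$. Your route buys two things: a clean proof that $\alpha^\infty(\om)$ and $\lim_T C_T(\om)/T$ exist, via superadditivity of $T\mapsto C_T^{>N}(\om)$ and Fekete's lemma (a point the paper essentially takes for granted), and a direct attainment argument for $g_1(\om)$ by forcing equality in the low-frequency estimate, where the paper instead argues by contradiction using eigenfunctions of diverging index as test data in $C_T^{>N}(\om)$. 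The price is the cross-term estimate, which the weak-convergence approach gets for free. Two minor points to tighten: the cross-term frequencies also include $\lambda_k+\lambda_j$, bounded below by $2\lambda_1>0$ rather than by $\lambda_{N+1}-\lambda_N$, so the relevant gap is $\min(\lambda_{N+1}-\lambda_N,\,2\lambda_1)$; and the inequality you actually derive involves $\alpha^{N,\infty}$ rather than $\alpha^\infty$, so in the last step you should let $N\to+\infty$ along thresholds with $\lambda_{N+1}>\lambda_N$, which is harmless since $\alpha^{N,\infty}$ increases to $\alpha^\infty(\om)$ and, for the attainment claim, $\alpha^{N,\infty}>\frac12 g_1(\om)$ already holds for $N$ large.
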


Consequences of this result are given hereafter.

\begin{center}
\fbox{\bf Characterization of the quantities $\alpha^T(\om)$ and $\alpha^{\infty}(\om)$}
\end{center}

In what follows, we say that $\gamma$ is a \emph{ray} if $\gamma$ is the projection onto $\Omega$ of a Riemannian geodesic traveling at speed one in the co-sphere bundle of $\Omega$. We denote by $\Gamma$ the set of all rays of $\Omega$.

\begin{definition}[Geometric quantity $g_2(\om)$]\label{def:geomQuant}
We define
\begin{equation}
g_2^T(\om)= \inf_{\gamma\in \Gamma} \frac{1}{T} \int_0^T \chi_\omega(\gamma(t)) \, dt 
\end{equation}
and
\begin{equation}\label{def:g2}
g_2(\om) = \lim_{T \to +\infty}g_2^T(\om).
\end{equation}
\end{definition}

The quantity $g_2^T(\om)$ stands for the minimal average time spent by a geodesic $\gamma$ in $\omega$.
Note that the mapping $T\mapsto g_2^T(\om)$ is nonnegative, is bounded above by $1$ and is  subadditive. Hence the limit in the definition of $g_2(\om)$ is well defined.  

In \cite{HebrardHumbert}, it has been shown how to compute the geometric quantity $g_2(\om)$ have been established in the case where $\Omega$ is a square, $\triangle_g$ the Dirichlet-Laplacian operator on $\Omega$ and $\omega\subset\Omega$ is a finite union of squares.

\begin{theorem}[Computation of $\alpha^T(\om)$] \label{computation} 
We have
$$
\frac12 g_2^T(\mathring{\om}) \leq \alpha^T(\mathring{\om}) \leq \alpha^T(\om) \leq \alpha^T(\overline{\om}) \leq \frac12  g_2^T(\overline{\om}). 
$$
\end{theorem}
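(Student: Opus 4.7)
The plan is to prove the five-term chain by two separate techniques: the three middle inequalities by a direct monotonicity argument, and the two outer inequalities by microlocal analysis (a Gaussian beam construction for the upper bound, and semiclassical defect measures for the lower bound).

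\textbf{Step 1: Monotonicity (middle inequalities).} For two measurable sets $\omega_1\subset\omega_2$ the pointwise bound $\chi_{\omega_1}\leq\chi_{\omega_2}$ gives $J_T^{\omega_1}\leq J_T^{\omega_2}$ on every nontrivial initial datum, hence $C_T^{>N}(\omega_1)\leq C_T^{>N}(\omega_2)$ for every $N$; passing to the limit $N\to+\infty$ yields $\alpha^T(\omega_1)\leq\alpha^T(\omega_2)$. Applying this to the inclusions $\mathring\om\subset\om\subset\overline\om$ delivers $\alpha^T(\mathring\om)\leq\alpha^T(\om)\leq\alpha^T(\overline\om)$.

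\textbf{Step 2: Upper bound $\alpha^T(\overline\om)\leq\tfrac12 g_2^T(\overline\om)$ via Gaussian beams.} Fix $\varepsilon>0$ and pick a ray $\gamma\in\Gamma$ such that $\tfrac{1}{T}\int_0^T\chi_{\overline\om}(\gamma(t))\,dt<g_2^T(\overline\om)+\varepsilon$. Invoke the classical Ralston-type construction of a Gaussian beam sequence $(y_k)$ of approximate solutions of \eqref{waveEqobs}, of the WKB form $y_k(t,x)=\mu_k^{n/4}a_k(t,x)e^{i\mu_k\phi(t,x)}$ with $\mu_k\to+\infty$, microlocally concentrated on the curve $\{(t,\gamma(t)):t\in[0,T]\}$. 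For $k$ large the initial data $(y_k^0,y_k^1)$ have spectral support essentially at frequencies of order $\mu_k$, hence are admissible in the definition of $C_T^{>N}(\overline\om)$ for any fixed $N$. A direct computation gives $\|(y_k^0,y_k^1)\|_{L^2\times H^{-1}}^2\sim 2M_k$ (the factor $2$ comes from $\|y_k^1\|_{H^{-1}}^2\sim\|y_k^0\|_{L^2}^2=:M_k$ since the beam has a single frequency scale $\mu_k$), while
\[
\int_0^T\!\!\int_{\overline\om}|y_k|^2\,dv_g\,dt\ \longrightarrow\ M_k\int_0^T\chi_{\overline\om}(\gamma(t))\,dt,
\]
using that $|y_k|^2\,dv_g\,dt$ concentrates as a measure on the curve $\{(t,\gamma(t))\}$ together with the upper semicontinuity of $\chi_{\overline\om}$ (which is precisely why $\overline\om$, rather than $\om$, appears on this side). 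Dividing and letting $\varepsilon\to 0$ yields $\alpha^T(\overline\om)\leq\tfrac12 g_2^T(\overline\om)$.

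\textbf{Step 3: Lower bound $\tfrac12 g_2^T(\mathring\om)\leq\alpha^T(\mathring\om)$ via defect measures.} Pick a sequence $(y_n^0,y_n^1)$ with $\|(y_n^0,y_n^1)\|_{L^2\times H^{-1}}=1$, orthogonal to the first $N_n$ eigenfunctions with $N_n\to+\infty$, and such that $J_T^{\mathring\om}(y_n^0,y_n^1)\to T\alpha^T(\mathring\om)$. Decompose the solutions as $y_n=y_n^++y_n^-$ with
\[
y_n^\pm(t)=\tfrac12 e^{\pm it\sqrt{-\triangle_g}}\Bigl(y_n^0\mp i(-\triangle_g)^{-1/2}y_n^1\Bigr),
\]
and observe that $\|y_n^+(0,\cdot)\|_{L^2}^2+\|y_n^-(0,\cdot)\|_{L^2}^2\to\tfrac12$. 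Up to a subsequence, associate with $y_n^\pm(0,\cdot)$ their semiclassical defect measures $\mu^\pm$ on $S^*\Omega$ (G\'erard--Lions--Paul), of respective masses $m^\pm$ with $m^++m^-=\tfrac12$. The standard propagation theorem for the half-wave operator identifies the defect measure of $y_n^\pm(t,\cdot)$ with $(\Phi^{\pm t})_*\mu^\pm$, where $\Phi^t$ is the geodesic flow on $S^*\Omega$. The cross contribution $\int_0^T\!\!\int_{\mathring\om}\Re\bigl(y_n^+\overline{y_n^-}\bigr)\,dv_g\,dt$ vanishes in the limit since the corresponding Wigner distributions live on disjoint parts of the characteristic set. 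Combining Fatou's lemma with the lower semicontinuity of $\chi_{\mathring\om}$ (which forces the appearance of $\mathring\om$ on this side) and Fubini yields
\[
T\alpha^T(\mathring\om)\ \geq\ \sum_{\pm}\int_{S^*\Omega}\!\!\int_0^T \chi_{\mathring\om}\bigl(\pi\circ\Phi^{\pm t}(x,\xi)\bigr)\,dt\,d\mu^\pm(x,\xi)\ \geq\ (m^++m^-)\,T\,g_2^T(\mathring\om)=\tfrac{T}{2}g_2^T(\mathring\om).
\]

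\textbf{Main obstacle.} The delicate ingredient is clearly Step 3: one must correctly normalize and transport the semiclassical defect measures of the two high-frequency sequences $y_n^\pm$, rigorously show that their cross interference is negligible in the limit, and exploit the Fatou-type inequality through the lower semicontinuity of $\chi_{\mathring\om}$. The asymmetric use of lower versus upper semicontinuity of $\chi_{\mathring\om}$ and $\chi_{\overline\om}$ is also exactly what prevents replacing $\mathring\om$ and $\overline\om$ by $\om$ itself in the two outer inequalities.
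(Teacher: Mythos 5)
Your Steps 1 and 2 are essentially sound. The monotonicity argument for the three middle inequalities is exactly what the paper uses. Your Gaussian-beam upper bound is, in substance, the paper's coherent-state construction (its Lemma \ref{lem_coherent_state} and the remark that $e^{it\Lambda}u_k$ is a half-wave Gaussian beam), applied directly to $\chi_{\overline{\om}}$ via upper semicontinuity; the paper instead sandwiches $\chi_{\overline{\om}}$ by continuous majorants $h_k\searrow\chi_{\overline{\om}}$ and invokes the exact identity $\al^T(h_k)=\frac12 g_2^T(h_k)$ of Theorem \ref{main_a}. Both routes work, although you should project out the low frequencies explicitly (as the paper does with $u_k-\pi_N u_k$) rather than assert that the beam data are ``essentially'' high-frequency.

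The genuine gap is in Step 3, in the claim that the cross contribution $\int_0^T\int_{\mathring{\om}}\mathrm{Re}\bigl(y_n^+\overline{y_n^-}\bigr)\,dv_g\,dt$ vanishes ``since the corresponding Wigner distributions live on disjoint parts of the characteristic set.'' That argument is valid only when the cross Wigner distribution is tested against a continuous (in practice smooth) symbol: the vanishing of $\langle \bar B_T(a)y_n^+,y_n^-\rangle$ for high-frequency data comes from the fact that $\bar B_T(a)=\frac1T\int_0^Te^{it\Lambda}\Op(a)e^{it\Lambda}\,dt$ is a pseudo-differential operator of order $-1$, hence compact --- and this requires $a$ smooth. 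For $a=\chi_{\mathring{\om}}$ with $\om$ merely measurable, $\bar B_T(\chi_{\mathring{\om}})$ is only a bounded operator, and weak-$*$ convergence to zero of the cross Wigner distribution does not survive testing against the discontinuous function $\chi_{\mathring{\om}}$: the interference could a priori concentrate on $\partial\om$, which is precisely the grazing-ray phenomenon this theorem is designed to circumvent. (The diagonal terms are safe, since there the portmanteau inequality for the open set $\mathring{\om}$, i.e.\ lower semicontinuity, gives the right-sign inequality.) The paper avoids the difficulty entirely by never doing microlocal analysis on $\chi_{\mathring{\om}}$ itself: it proves $\al^T(a)=\frac12 g_2^T(a)$ for continuous $a$ (via Egorov, the G\aa rding inequality and coherent states, where the compactness of $\bar B_T(a)$ is available), then approximates $\chi_{\mathring{\om}}$ from below by an increasing sequence of continuous $h_k$, uses the monotonicity $\al^T(h_k)\leq\al^T(\mathring{\om})$, and proves $g_2^T(h_k)\to g_2^T(\mathring{\om})$ by a compactness argument on the set of rays. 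To repair your Step 3 you would either have to supply a genuine proof that the cross term vanishes for characteristic functions --- which does not follow from soft defect-measure arguments --- or retreat to continuous minorants as the paper does, at which point defect measures are no longer needed and the G\aa rding inequality suffices.
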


Let $\gamma$ be the support of a closed geodesic of $\Om$ and set $\om= \Om \setminus \gamma$ (open set). Then $\alpha^T(\om) = 1$ and $ g_2^T(\om) = 0$. 
Hence, the estimate given by Theorem \ref{computation} is not sharp. 

Note however that, if $\om$ is Jordan mesurable, i.e., if the Lebesgue measure of $\partial\omega=\overline\omega\setminus\mathring{\omega}$ is zero, then it follows from the definition of $C_T^{>N}$ that $C_T^{>N} (\om) = C_T^{>N}(\overline{\om})$ for every $N\in\N$.
As a consequence, Theorem \ref{computation} can be improved in that case by noting that 
$\frac12 g_2^T(\mathring{\overline{\om}}) \leq \alpha^T(\om)$, under additional regularity assumptions on $\omega$.

\begin{corollary} \label{corcomputation} 
If the measurable subset $\om$ satisfies the regularity assumption
\begin{equation*} \label{condition}
\Hun \qquad\qquad g_2^T(\Om \setminus (\overline{\om} \setminus \mathring{\om}))= 1
\end{equation*}
then
 $$
2 \alpha^T(\om)= g_2^T(\mathring{\om}) = g_2^T(\overline{\om})= g_2^T(\om).
 $$
\end{corollary}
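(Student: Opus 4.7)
The plan is to reduce the statement to Theorem~\ref{computation} by showing that the hypothesis $\Hun$ forces the three geometric quantities $g_2^T(\mathring{\om})$, $g_2^T(\om)$, and $g_2^T(\overline{\om})$ to coincide. Once this is established, the chain of inequalities
\[
\tfrac{1}{2} g_2^T(\mathring{\om}) \leq \alpha^T(\mathring{\om}) \leq \alpha^T(\om) \leq \alpha^T(\overline{\om}) \leq \tfrac{1}{2} g_2^T(\overline{\om})
\]
supplied by Theorem~\ref{computation} collapses, yielding $2\alpha^T(\om) = g_2^T(\om)$ immediately.

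The key observation is that, since $\chi_A \leq 1$ pointwise, one always has $g_2^T(A) \leq 1$ for any measurable $A$. Therefore the equality $g_2^T(\Om \setminus (\overline{\om} \setminus \mathring{\om})) = 1$ is not merely a statement about an infimum: it forces
\[
\frac{1}{T} \int_0^T \chi_{\Om \setminus \partial \om}(\gamma(t)) \, dt = 1
\]
for every individual ray $\gamma \in \Gamma$, where I write $\partial\om := \overline{\om} \setminus \mathring{\om}$. Since the integrand is $\{0,1\}$-valued, this means $\gamma(t) \notin \partial\om$ for almost every $t \in [0,T]$, i.e., $\int_0^T \chi_{\partial\om}(\gamma(t))\,dt = 0$ for each $\gamma$.

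Combining this with the inclusions $\mathring{\om} \subset \om \subset \overline{\om}$, whose successive differences are contained in $\partial\om$, gives
\[
\int_0^T \chi_{\mathring{\om}}(\gamma(t))\,dt = \int_0^T \chi_\om(\gamma(t))\,dt = \int_0^T \chi_{\overline{\om}}(\gamma(t))\,dt
\]
for every ray $\gamma \in \Gamma$. Taking the infimum over $\Gamma$ then yields $g_2^T(\mathring{\om}) = g_2^T(\om) = g_2^T(\overline{\om})$, which is precisely the ingredient needed to collapse the inequality chain from Theorem~\ref{computation}.

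The argument is essentially set-theoretic, and I do not anticipate any serious analytic obstacle. The only mild subtlety lies at the very start: one must recognize that $g_2^T(\cdot) \leq 1$ universally, which is what allows hypothesis $\Hun$ to be promoted from an infimum identity to a statement valid pointwise in $\gamma \in \Gamma$ (and almost everywhere in $t$). Without this observation, hypothesis $\Hun$ might wrongly appear to constrain only an ``averaged'' behavior of the rays rather than every ray individually.
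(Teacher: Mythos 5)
Your proof is correct and follows the same overall strategy as the paper: establish $g_2^T(\mathring{\om})=g_2^T(\om)=g_2^T(\overline{\om})$ under $\Hun$ and then collapse the chain of inequalities in Theorem~\ref{computation}. The only difference is in the mechanism for that first step: the paper (Proposition~\ref{conditionH}) argues at the level of infima with an $\varepsilon$-near-minimizing ray, whereas you promote $\Hun$ to the pointwise statement that every ray avoids $\overline{\om}\setminus\mathring{\om}$ for almost every $t$ --- a valid and slightly more direct route to the same conclusion.
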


Many measurable sets $\om$ satisfy Assumption $\Hun$. Geometrically speaking, $\Hun$ stipulates that $\om$ has no \textit{grazing ray}. We say that a ray $\gamma\in\Gamma$ is grazing $\omega$ if $\gamma(t)\in\partial\omega$ over a set of times of positive measure.

\noindent As a consequence of Corollary \ref{obser_carac}, Corollary \ref{corcomputation} and Theorem \ref{computation}, one has the following simple characterization of  observability.

\begin{corollary} \label{obser_carac_explicit}
Let $T>0$ and let $\om \subset \Om$ be a Lebesgue measurable subset of $\Omega$.
\begin{itemize}
\item[(i)] If $g_2^T(\mathring{\om})>0$ then $C_T(\om) >0$.
\item[(ii)] If $C_T(\overline\om)>0$ then $g_2^T(\overline\om)>0$.
\item[(iii)] Assume that $\om$ satisfies the regularity assumption $\Hun$. Then 
\begin{equation*}%\label{equivalence}
g_2^{T}(\om) >0\ \Leftrightarrow\ C_T(\om) >0.
\end{equation*}
\end{itemize}
\end{corollary}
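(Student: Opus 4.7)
The plan is to derive each item directly by chaining the three previously established results: Theorem \ref{computation}, Corollary \ref{corcomputation}, and the high-frequency characterization of observability (Corollary \ref{obser_carac}). Since no new analysis is needed, the proof reduces to a careful bookkeeping of the inequalities provided by those statements.

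For part (i), I would start from the hypothesis $g_2^T(\mathring{\om})>0$ and apply the leftmost inequality in Theorem \ref{computation}, namely $\frac12 g_2^T(\mathring{\om})\leq \alpha^T(\mathring{\om})\leq \alpha^T(\om)$, to conclude that $\alpha^T(\om)>0$. Then Corollary \ref{obser_carac} gives $C_T(\om)>0$. For part (ii), I would run the same chain in the opposite direction: $C_T(\overline{\om})>0$ yields $\alpha^T(\overline{\om})>0$ by Corollary \ref{obser_carac}, and then the rightmost inequality in Theorem \ref{computation}, $\alpha^T(\overline{\om})\leq \frac12 g_2^T(\overline{\om})$, gives $g_2^T(\overline{\om})>0$.

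For part (iii), the assumption $\Hun$ activates Corollary \ref{corcomputation}, which provides the equality $2\alpha^T(\om)=g_2^T(\om)$. Hence $g_2^T(\om)>0 \Leftrightarrow \alpha^T(\om)>0$, and a second application of Corollary \ref{obser_carac} upgrades the right-hand side to $C_T(\om)>0$, yielding the equivalence.

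There is no substantial obstacle here: the entire work has already been carried out in the preceding statements, and the only point worth being slightly careful about is making sure one applies Theorem \ref{computation} to the correct version of $\omega$ (its interior in (i), its closure in (ii), and $\omega$ itself in (iii) via the $\Hun$-enabled equalities of Corollary \ref{corcomputation}). The asymmetry between (i) and (ii), reflected in the use of $\mathring{\om}$ versus $\overline{\om}$, is exactly what prevents a general equivalence without the regularity assumption $\Hun$, and is what makes part (iii) a genuine consequence of Corollary \ref{corcomputation} rather than of Theorem \ref{computation} alone.
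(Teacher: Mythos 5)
Your proposal is correct and is exactly the argument the paper intends: the paper itself states that this corollary follows from Corollary \ref{obser_carac}, Corollary \ref{corcomputation} and Theorem \ref{computation}, and your chaining of those three results (using $\mathring{\om}$ in (i), $\overline{\om}$ in (ii), and the $\Hun$-enabled equality $2\alpha^T(\om)=g_2^T(\om)$ in (iii)) is precisely the intended bookkeeping.
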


The first item above is already well known (see \cite{BardosLebeauRauch,rauch-taylor}): it says that, for $\om$ open, GCC implies observability. Indeed, the condition $g_2^T(\mathring{\om})>0$ is exactly GCC for $(\mathring{\om},T)$.
As already mentioned, the article \cite{BardosLebeauRauch} also deals with manifolds with boundary, which is not the case in this article. Recovering the boundary case by the method we present here would require a deeper study of the quantity $\alpha^T(\om)$ that we do not perform here.
We also mention \cite{BurqGerard}, where the authors prove that GCC is necessary and sufficient when replacing the characteristic function of $\omega$ by a continuous density function $a$ in all quantities introduced above. 

\medskip 

When there exist grazing rays the situation is more itricate. For instance, take $\Omega=\mathbb{S}^2$, the unit sphere of $\R^3$, and take $\omega$ the open Northern hemisphere. Then, the equator is a trapped ray (i.e., it never meets $\omega$) and is grazing $\omega$. Therefore we have $g_2^T(\omega)=0$ for every $T>0$, while $C_T(\omega)=g_1(\om)=g_1(\overline \om)=g_2^T(\overline\omega)=1/2$ for every $T\geq\pi$ (this follows immediately from computations done in \cite{Lebeau_JEDP1992}).

\medskip

Note also that $g_1(\om)>0$ is not sufficient to guarantee that \eqref{waveEqobs} is observable on $\omega$. For instance, take $\Om=\mathbb{T}^2$, the 2D torus, in which we choose $\omega$ as being the union of four triangles, each of them being at an corner of the square and whose side length is $1/2$. By construction, there are two trapped rays along $x=1/2$ and $y=1/2$ touching $\omega$ without crossing it over a positive duration. It follows that $g_2^T(\om)=g_2(\om)=C_T(\om)=0$ for every $T>0$. Moreover, simple computations show that $g_1(\om)>0$.

\medskip

From Theorem \ref{main_long_time} and Corollary \ref{corcomputation}, one gets the following asymptotic result.

\begin{corollary} \label{cormain}
If the measurable subset $\omega$ satisfies $\Hun$ then
\begin{equation*}%\label{eq2thmain}
\lim_{T \to +\infty} \frac{C_T(\om)}{T}= \frac{1}{2}\min \left( g_1(\om), g_2(\overline{\om})\right).
\end{equation*}
\end{corollary}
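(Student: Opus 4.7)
The plan is to derive this corollary by combining Theorem \ref{main_long_time}, which already identifies $\lim_{T \to +\infty} C_T(\om)/T$ as $\min(\frac{1}{2} g_1(\om), \alpha^\infty(\om))$, with the explicit computation of $\alpha^T(\om)$ provided by Corollary \ref{corcomputation}. The entire task thus reduces to matching $\alpha^\infty(\om)$ with $\frac{1}{2} g_2(\overline{\om})$ under Assumption $\Hun$.

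First, I would check that Assumption $\Hun$, although formulated with a parameter $T$, is in fact time-independent in the sense relevant here. The condition $g_2^T(\Om \setminus (\overline{\om} \setminus \mathring{\om})) = 1$ means that no ray of $\Gamma$ spends a set of times of positive measure inside $\partial \omega = \overline{\om} \setminus \mathring{\om}$ over the interval $[0,T]$. Since $\Gamma$ is invariant under time translation and the union of two intervals of positive-measure traversals still has positive measure, $\Hun$ holding for some $T>0$ is equivalent to it holding for every $T>0$. Hence Corollary \ref{corcomputation} applies for all $T$ and yields the pointwise identity
\begin{equation*}
2 \alpha^T(\om) = g_2^T(\overline{\om}) \qquad \text{for every } T>0.
\end{equation*}

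Next, I would pass to the limit $T \to +\infty$. On the left-hand side, Theorem \ref{main_long_time} asserts that the limit $\alpha^\infty(\om) = \lim_{T\to+\infty} \alpha^T(\om)$ exists. On the right-hand side, the mapping $T \mapsto g_2^T(\overline{\om})$ is bounded and subadditive (as noted after Definition \ref{def:geomQuant}), so by Fekete's lemma the limit $g_2(\overline{\om}) = \lim_{T \to +\infty} g_2^T(\overline{\om})$ exists. Therefore
\begin{equation*}
2 \alpha^\infty(\om) = g_2(\overline{\om}).
\end{equation*}

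Finally, I would plug this identity into the conclusion of Theorem \ref{main_long_time} to obtain
\begin{equation*}
\lim_{T \to +\infty} \frac{C_T(\om)}{T} = \min\!\left(\tfrac{1}{2} g_1(\om), \, \alpha^\infty(\om)\right) = \tfrac{1}{2}\min\!\left( g_1(\om), \, g_2(\overline{\om})\right),
\end{equation*}
which is the stated equality. No step presents a real obstacle: the argument is purely a bookkeeping combination of the earlier results. The only point that deserves attention is the uniformity-in-$T$ remark about $\Hun$, which ensures Corollary \ref{corcomputation} is applicable along the entire sequence $T \to +\infty$ rather than at a single time.
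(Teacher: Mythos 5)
Your proposal is correct and follows exactly the route the paper intends: it deduces the corollary from Theorem \ref{main_long_time} combined with Corollary \ref{corcomputation}, passing to the limit $T\to+\infty$ to identify $2\alpha^\infty(\om)$ with $g_2(\overline{\om})$. Your extra observation that Assumption $\Hun$ is independent of $T$ (by time-translation invariance of the set of rays) is a worthwhile detail that the paper leaves implicit.
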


%\begin{remark}[On the regularity assumptions on $\omega$]
%A natural issue about the result stated in Theorem \ref{main} is to investigate what happens when $\omega$ does not satisfy Condition $\Hun$. 
% In this view, we provide two tell-tale examples showing the necessity of such a regularity assumption to guarantee that \eqref{eq2thmain} is true.

%Assume first that $\om$ is a dense countable set in $\Omega$ (for instance, $\omega=\mathbb{T}^n\cap \mathbb{Q}^n$ where $\mathbb{T}$ is the one-dimensional torus $\mathbb{T}=\R/(2\pi)$). In this case, there holds obviously $C_T(\om)=0$ since every integral over $\om$ vanishes in the definition of $C_T(\om)$, $g_1(\chi_{\omega})=g_2(\chi_{\omega})=0$ but $g_2(\overline{\om}) =1$. This is due to the fact that  $g_2(\om)= \not= g_2(\overline{\om})$  (see Theorem \ref{computation}). This shows the necessity of considering Jordan-measurable sets to ensure the validity of \eqref{eq2thmain} and thus to get an asymptotic term involving the geometric quantity $g_2(\overline{\om})$.  

%Another natural issue is to wonder if $\chi_{\overline{\om}}$ could be replaced by $\chi_\om$ in the statement of Theorem \ref{main}, even in the case where $\omega$ is assumed to be Jordan measurable. The answer is no. Indeed, consider $\om=\Om \setminus \gamma$, where $\gamma$ is a closed geodesic of $\Om$. In this case, $\om$ is Jordan measurable, and one has $C_T(\om)=C_T(\Om)$ but $g_2(\om)=0$ whereas $g_2(\overline{\om}) =1$.

%\end{remark}

\begin{remark}
The above result echoes a result by G. Lebeau that we recall hereafter.
In \cite{lebeau2}, the author considers the damped wave equation
\begin{equation}\label{eqdampedwave}
\partial_{tt}y(t,x)-\triangle_gy(t,x)+2a(x)\partial_ty(t,x)=0
\end{equation}
on a compact Riemannian manifold $\Omega$ with a $\mathcal{C}^\infty$ boundary, where the function $a(\cdot)$ is a smooth nonnegative function on $\Omega$. 
Given any $(y^0,y^1)\in H^1_0(\Omega)\times L^2(\Omega)$, for any $t\in\R$ we define
$$
E_{(y^0,y^1)}(t)=\int_\Omega (|\nabla y(t,x)|^2+(\partial_ty(t,x))^2)\, dv_g
$$
the energy at time $t$ of the unique solution $y$ of \eqref{eqdampedwave} such that $(y(0,\cdot),\partial_ty(0,\cdot))=(y^0,y^1)$.
Let $\om$ be any open set such that $a\geq \chi_\om$ almost everywhere in $\Omega$.
If $(\omega,T)$ satisfies GCC then there exist $\tau>0$ and $C>0$ such that
\begin{equation}\label{eq:2043}
E_{(y^0,y^1)}(t)\leq Ce^{-2\tau t} E_{(y^0,y^1)}(0)
\end{equation}
for all $(y^0,y^1)\in H^1_0(\Omega)\times L^2(\Omega)$ (see \cite{BardosLebeauRauch,Haraux,lebeau2}) and it is established in \cite[Theorem 2]{lebeau2} that the smallest decay rate $\tau(a)$ such that \eqref{eq:2043} is satisfied is 
$$
\tau(a)=\min \left( -\mu (\mathcal{A}_a),g_2(a)\right)
$$
where $g_2(a)$ is the geometric quantity defined by \eqref{def:g2} with $\chi_\om$ replaced by $a$, and $\mu(\mathcal{A}_a)$ is the spectral abscissa of the damped wave operator $\mathcal{A}_a=\begin{pmatrix}0 & \operatorname{Id}\\
\triangle_g & -2a(\cdot)\operatorname{Id}
\end{pmatrix}$.
\end{remark}

\begin{remark}[Probabilistic interpretation of the spectral quantity $g_1(\om)$]
The quantity $g_1(\om)$ can be interpreted as an averaged version of the observability constant $C_{T}(\om)$, where the infimum in \eqref{defCT} is now taken over random initial data. More precisely, let $(\beta_{1,j}^\nu)_{j\in\N^*}$ and $(\beta_{2,j}^\nu)_{j\in\N^*}$ be two sequences of Bernoulli random variables on a probability space $(\mathcal{X},\mathcal{A},\mathbb{P})$ such that
\begin{itemize}
\item for $m=1,2$, $\beta_{m,j}^\nu=\beta_{m,k}^\nu$ whenever $\lambda_j=\lambda_k$,
\item all random variables $\beta_{m,j}^\nu$ and $\beta_{m',k}^{\nu}$, with $(m,m')\in \{1,2\}^2$, $j$ and $k$ such that $\lambda_j\neq \lambda_k$, are independent,
\item there holds $\mathbb{P}(\beta_{1,j}^\nu=\pm 1)=\mathbb{P}(\beta_{2,j}^\nu=\pm 1)=\frac{1}{2}$ and $\mathbb{E}(\beta_{1,j}^\nu\beta_{2,k}^\nu)=0$,
for every $j$ and $k$ in $\N^*$ and every $\nu\in \mathcal{X}$.
\end{itemize}
Using the notation $\mathbb{E}$ for the expectation over the space $\mathcal{X}$ with respect to the probability measure $\mathbb{P}$, we claim that $\frac{T}{2}g_1(\om)$ is the largest nonnegative constant $C$ for which 
\begin{equation*}%\label{ineqobswRand}
C \Vert (y^0,y^1)\Vert^2_{L^2\times H^{-1}}
\leq \mathbb{E}\left(\int_0^T\int_\Omega \chi_\omega(x)\vert y^\nu(t,x) \vert ^2 \, dv_g \, dt\right)
\end{equation*}
for all $(y^0,y^1)\in L^2(\Omega)\times H^{-1}(\Omega)$, where $y^\nu$ is defined by
$$
y^\nu(t,x)=\sum_{j=1}^{+\infty}\left(\beta_{1,j}^\nu a_je^{i\lambda_{j}t}+\beta_{2,j}^\nu b_je^{-i\lambda_{j}t}\right)\phi_{j}(x),
$$
where the coefficients $a_j$ and $b_j$ are defined by
\begin{equation*}
%\label{defajbj}
\begin{split}
a_j &= \frac{1}{2}\left(\int_\Omega y^0(x) \phi_j (x)\, dv_g-\frac{i}{\lambda_j} \int_\Omega y^1(x) \phi_j (x)\, dv_g\right),\\
 b_j &= \frac{1}{2}\left(\int_\Omega y^0(x) \phi_j (x)\, dv_g+\frac{i}{\lambda_j} \int_\Omega y^1(x) \phi_j (x)\, dv_g\right)
\end{split}
\end{equation*}
\noindent for every $j\in\N^*$.
In other words, $y^\nu$ is the solution of the wave equation \eqref{waveEqobs} associated with the random initial data $y_{0}^{\nu}(\cdot)$ and $y_{1}^{\nu}(\cdot)$ determined by their Fourier coefficients $a_j^\nu=\beta_{1,j}^\nu a_j$ and $b_j^\nu=\beta_{2,j}^\nu b_j$.
This largest constant is called \emph{randomized observability constant} and has been defined in \cite[Section 2.3]{PTZobsND} and \cite[Section 2.1]{PTZ_optparab}. We also refer to \cite{PTZ_randObscont} for another deterministic interpretation of $\frac{T}{2}g_1(\om)$. 
\end{remark}

\begin{remark}[Extension of Corollary \ref{cormain} to manifolds with boundary.]
One could expect that a similar asymptotic to the one stated in Corollary \ref{cormain} holds for the Laplace-Beltrami operator on a manifold $\Omega$ such that $\partial\Omega\neq \emptyset$, with homogeneous Dirichlet boundary conditions. 
For instance, in the 1D case $\Omega=(0,\pi)$, it is prove in \cite[Lemma 1]{PTZObs1} by means of Fourier analysis that for every measurable set $\omega$
$$
\lim_{T\rightarrow +\infty}\frac{C_T(\om)}{T} = \inf_{j\in\N^*}\int_\omega\phi_j(x)^2\, dv_g=g_1(\om) \qquad \textrm{with}\quad \phi_j(x)= \frac{1}{\sqrt{\pi}}\sin (jx) .
$$
In higher dimension, the problem is more difficult because we are not able to compute explicitly $\alpha^T(\om)$ (see the proof of Theorem \ref{computation} where we use the Egorov theorem).
\end{remark}

\begin{center}
\fbox{\bf Spectral gap and consequences}\label{sec:gap}
\end{center}
%In the case where the eigenvalues are well separated in a sense made precise below, another argument than the one used in the proof of Theorem \ref{main_long_time} leads to a more precise result. 

\begin{theorem} \label{spectral_gap1}
Assume that the spectrum $(\lambda_j)_{j\in \N^*}$ satisfies the uniform gap property
\begin{quote} 
{\it (UG) \quad There exists $\gamma>0$ such that if $\lambda_j\neq \lambda_k$ then $|\lambda_j-\lambda_k|\geq \gamma$.}
\end{quote}
Then for every measurable subset $\omega$ of $\Omega$ we have
$$
\lim_{T\to +\infty} \frac{C_T(\om)}{T} 
= \frac{1}{2} g_1(\om).$$
%$$
%\lim_{T\to +\infty} \frac{C_T(\om)}{T} 
%= \frac{1}{2}\inf \left\{\frac{\int_\omega \sum_{\lambda\in U_{\infty}}\left|\sum_{k\in I_{\infty}%(\lambda)}c_k\phi_k(x)\right|^2\, dv_g}{\sum_{k=1}^{N}|c_k|^2}\ \vert\ (c_j)_{j\in \N^*}\in (\C)^N\setminus\{0\} \right\},
%$$
%where $U_{\infty}$ is the set of all distinct eigenvalues $\lambda_k$ and $I_{\infty}(\lambda)=\{j\in \N^* \ \vert\ \lambda_j=\lambda\}$.
\end{theorem}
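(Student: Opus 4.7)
The plan is to push through a direct Ingham-based argument on top of the Fourier decomposition, exploiting the uniform spectral gap to bypass the min in Theorem \ref{main_long_time}.

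Given nontrivial $(y^0,y^1) \in L^2(\Omega) \times H^{-1}(\Omega)$, I group the Fourier modes of the corresponding solution by distinct eigenvalues:
\begin{equation*}
y(t,x) = \sum_k \bigl(A_k(x)\, e^{i\mu_k t} + B_k(x)\, e^{-i\mu_k t}\bigr),
\end{equation*}
where $(\mu_k)_k$ enumerates the distinct positive values of the sequence $(\lambda_j)_{j\geq1}$ and each $A_k, B_k$ is an eigenfunction of $-\triangle_g$ associated with the eigenvalue $\mu_k^2$. A direct computation using the definitions of the Fourier coefficients $a_j,b_j$ yields the identity $\sum_k\bigl(\Vert A_k\Vert_{L^2}^2+\Vert B_k\Vert_{L^2}^2\bigr)=\tfrac12 \Vert(y^0,y^1)\Vert^2_{L^2\times H^{-1}}$. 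Since $\lambda_1>0$, the hypothesis (UG) ensures that the family of exponents $\{\pm\mu_k\}$ still admits a uniform gap $\gamma' := \min(\gamma,2\lambda_1)>0$.

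The key input is the sharp Ingham inequality (in the refined form due to Haraux, Komornik--Loreti) applied pointwise in $x$: for every $T > 2\pi/\gamma'$,
\begin{equation*}
\int_0^T |y(t,x)|^2\, dt \;\geq\; \Bigl(T - \tfrac{2\pi}{\gamma'}\Bigr) \sum_k \bigl(|A_k(x)|^2 + |B_k(x)|^2\bigr).
\end{equation*}
Integrating in $x$ over $\omega$, and using that each $A_k, B_k$ is a nonconstant eigenfunction (as $\mu_k\geq \lambda_1>0$), so that $\int_\omega |A_k|^2\, dv_g \geq g_1(\om) \Vert A_k\Vert_{L^2(\Omega)}^2$ by the very definition of $g_1(\om)$ (and similarly for $B_k$), one obtains
\begin{equation*}
\int_0^T\!\!\int_\omega |y|^2\, dv_g\, dt \;\geq\; \Bigl(T - \tfrac{2\pi}{\gamma'}\Bigr) g_1(\om) \sum_k \bigl(\Vert A_k\Vert^2 + \Vert B_k\Vert^2\bigr) \;=\; \tfrac{(T - 2\pi/\gamma')\, g_1(\om)}{2}\,\Vert(y^0,y^1)\Vert^2_{L^2\times H^{-1}}.
\end{equation*}
Taking the infimum over $(y^0,y^1)$ yields $C_T(\om)/T \geq \tfrac12 g_1(\om)\bigl(1 - 2\pi/(T\gamma')\bigr)$, hence $\liminf_{T\to\infty} C_T(\om)/T \geq \tfrac12 g_1(\om)$. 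Combined with the universal upper bound $C_T(\om)/T \leq \tfrac12 g_1(\om)$ furnished by Theorem \ref{main_finite_time}, this yields the claimed equality.

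The principal technical obstacle is the appeal to Ingham's inequality with the \emph{sharp} asymptotic constant $1$, rather than the $2/\pi$ produced by Ingham's classical 1936 theorem (which would only give $\liminf C_T/T \geq g_1(\om)/\pi$, falling short of the target). Sharpened versions delivering an inequality of the form $(T-T_0)\sum|c_k|^2$ with $T_0$ depending only on $\gamma'$ are available in the literature under (UG); checking that the relevant version applies in our pointwise-in-$x$ setting is straightforward once one observes that, thanks to the multiplicity grouping above, the coefficients at each fixed $x$ are indexed by the distinct frequencies $\pm\mu_k$, whose gap $\gamma'$ is positive and independent of $x$.
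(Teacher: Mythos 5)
Your proof is correct, but it takes a genuinely different route from the paper's. The paper proves this theorem by rerunning the concentration--compactness argument of Theorem \ref{main_long_time} (splitting a minimizing sequence as a weak limit plus a weakly vanishing remainder) and upgrading the pointwise convergence $\bar A_T(a)\to\bar A_\infty(a)$ of Lemma \ref{convAB} to convergence in operator norm under $(UG)$ (Lemma \ref{convUG}); that upgrade is where the Montgomery--Vaughan inequality enters. You instead give a direct, quantitative lower bound: grouping modes by distinct eigenvalues and applying a sharp Ingham-type estimate pointwise in $x$, you get $C_T(\om)\geq \frac12 g_1(\om)\bigl(T-\frac{2\pi}{\gamma'}\bigr)$ for every $T$, which is strictly more information than the asymptotic statement (it gives an explicit rate of convergence of $C_T(\om)/T$ to $\frac12 g_1(\om)$) and avoids the compactness machinery entirely. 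The ``principal technical obstacle'' you flag is in fact not one: the inequality $\int_0^T\bigl|\sum_k c_k e^{i\omega_k t}\bigr|^2\,dt\geq \bigl(T-\frac{2\pi}{\gamma'}\bigr)\sum_k|c_k|^2$ follows immediately from expanding the square, isolating the diagonal term $T\sum_k|c_k|^2$, and bounding the two off-diagonal sums $\sum_{j\neq k}\frac{c_je^{iT\omega_j}\overline{c_ke^{iT\omega_k}}}{i(\omega_j-\omega_k)}$ and $\sum_{j\neq k}\frac{c_j\bar c_k}{i(\omega_j-\omega_k)}$ each by $\frac{\pi}{\gamma'}\sum_k|c_k|^2$ via the Montgomery--Vaughan inequality already quoted in the paper --- so your argument ultimately rests on the same analytic input as the paper's, deployed scalar-by-scalar in $x$ rather than at the operator level. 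Two minor points to tidy up: justify the pointwise-in-$x$ manipulation by first taking initial data with finitely many modes and concluding by density (the observability inequality is stable under $L^2\times H^{-1}$ limits), and note that when $A_k=0$ the bound $\int_\om|A_k|^2\,dv_g\geq g_1(\om)\Vert A_k\Vert^2$ holds trivially, so no nondegeneracy assumption is needed on the individual blocks.
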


As a consequence, thanks with Theorems \ref{main_long_time} and \ref{computation}, under $(UG)$ we have
\begin{equation} \label{g1leqg2}
 g_1(\om) \leq g_2(\overline{\om})
\end{equation}
for every measurable subset $\omega$ of $\Omega$. Note that, without spectral gap, such an inequality obviously does not hold true in general: take $\Omega$ the flat torus and $\omega$ a rectangle in the interior of $\Omega$ (see \cite{PTZobsND,PTZ_optparab} for various examples).

\begin{remark}
Note that the spectral gap assumption $(UG)$ is done for \emph{distinct} eigenvalues: it does not preclude multiplicity.
The assumption is satisfied for example for the sphere. Note that, under $(UG)$, the geodesic flow must be periodic (see \cite{Guillemin}), i.e., $\Omega$ is a Zoll manifold.
\end{remark}

%\begin{remark}[Characterization of manifolds for which $g_1(\om)\leq g_2(\om)$]
%A natural issue is to wonder whether one can characterize the manifolds $(\Om,g)$ satisfying \eqref{g1leqg2} for any measurable subset $\om$ of $\Om$. Such an issue will be answered (among others) in the forthcoming paper \cite{HPT2}. More precisely, it will be stated that this property holds true if and only if $\Om$ is a manifold whose geodesic flow is periodic and such that each geodesic is the support of a quantum limit. We will also establish further information on quantum limits and properties of the geodesic flow by using similar techniques to those developed in the present paper.
%\end{remark}
 
 \begin{remark}[Application of Theorem \ref{spectral_gap1}]
Theorem \ref{spectral_gap1} applies in particular to the following cases:
\begin{itemize}
\item {\it The 1D torus $\mathbb{T}=\mathbb{R}/(2\pi)$.} The operator $\triangle_g=\partial_{xx}$ is defined on the subset of the functions of $H^2(\mathbb{T})$ having zero mean. All eigenvalues are of multiplicity $2$ and are given by $\lambda_j=j$ for every $j\in\N^*$ with eigenfunctions $e_j^1(x)=\sqrt{\frac{1}{\pi}}\sin(jx)$ and $e_j^2(x)=\sqrt{\frac{1}{\pi}}\cos(jx)$. The spectral gap is $\gamma=1$ and we compute
\begin{equation*}
\begin{split}
\lim_{T\to +\infty} \frac{C_T(\om)}{T} &=\frac{1}{\pi}\inf_{j\in\N^*}\inf_{\alpha \in [0,1]}\int_\omega \left(\sqrt{\alpha}\sin (jx)+\sqrt{1-\alpha}\cos(jx)\right)^2\, dx \\
&=\frac{1}{\pi}\left(\frac{|\omega|}{2}-\sup_{j\in\N^*}\sqrt{\left(\int_\omega \sin (2jx)\, dx\right)^2+\left(\int_\omega \cos (2jx)\, dx\right)^2}\right)
\end{split}
\end{equation*}
\item {\it The unit sphere $\mathbb{S}^{n}$ of $\R^{n+1}$.} The operator $\triangle_g$ is defined from the usual Laplacian operator on the Euclidean space $\R^{n+1}$ by the formula  $\triangle_g=r^2\triangle_{\R^{n+1}}-\partial_{rr}-\frac{n}{r}\partial_r$ where $r=\Vert x\Vert_{\R^{n+1}}$ for every $x\in\R^{n+1}$. Its eigenvalues are $\lambda_k=k(k+n-1)$ where $k\in \N$. The multiplicity of $\lambda_k$ is $k(k+n-1)$ and the space of eigenfunctions is the space of homogeneous harmonic polynomials\footnote{An orthogonal basis of spherical harmonics is given by
$$
Y_{l_1, \dots l_{n}} (\theta_1, \dots \theta_{n}) = \frac{1}{\sqrt{2\pi}} e^{i l_1 \theta_1} \prod_{j = 2}^{n} \widetilde{P}^{l_{n-1}}_{l_j,j} (\theta_j)
$$
where the indices are integers satisfying $|l_1| \leq l_2 \leq ... \leq l_{n}$ and the eigenvalue is $-l_{n}(l_{n} + n-1)$. The functions in the product are defined by
$$
 \widetilde{P}^l_{L,j} (\theta) = \sqrt{\frac{2L+j-1}{2} \frac{(L+l+j-2)!}{(L-l)!}} \sin^{\frac{2-j}{2}} (\theta) P^{-(l + \frac{j-2}{2})}_{L+\frac{j-2}{2}} (\cos \theta),
$$
where, for two real numbers $\nu$ and $\mu$, the function $P^{-\mu}_{\nu}$ is the associated Legendre function of the first kind defined by
$$
P_\nu^{-\mu}(x)=\frac{1}{\Gamma(1+\mu)}\left(\frac{1-x}{1+x}\right)^{\mu/2}F\left(-\nu,\nu+1,1+\mu,\frac{1-x}{2}\right),
$$ 
where $\Gamma$ is the Euler's Gamma function and $F$ is the hypergeometric function (see e.g. \cite{Higuchi}).} of degree $k$.
As a result, we compute
$$
\lim_{T\to +\infty} \frac{C_T(\om)}{T} =\inf_{k\in\N}\inf_{\phi\in \mathcal{H}_k}\frac{\int_\omega |\phi(x)|^2\, dx}{\int_{\mathbb{S}^n} |\phi(x)|^2\, dx},
$$
where $\mathcal{H}_k$ is the space of homogeneous harmonic polynomials of degree $k$.
\end{itemize}
\end{remark}

As a byproduct of Theorem \ref{spectral_gap1}, we recover a well known result on the existence of quantum limits supported by closed rays. Recall that a quantum limit for $-\triangle_{g}$ is a probability measure given as a weak limit (in the space of Radon measures) of the sequence of measures $(\phi_j(x)^2\, dx)_{j\in\N^*}$.

\begin{corollary} \label{qumeasure}
Under $(UG)$, for any (closed) ray $\gamma\in\Gamma$ there exists a quantum limit supported on $\gamma$.
\end{corollary}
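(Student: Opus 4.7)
The key input is the inequality $g_1(\om)\le g_2(\overline{\om})$ stated as \eqref{g1leqg2}, valid under $(UG)$ for every measurable $\om\subset\Om$ and obtained by combining Theorems~\ref{main_long_time}, \ref{computation} and \ref{spectral_gap1}. My plan is to feed \eqref{g1leqg2} with a shrinking family of measurable sets avoiding $\gamma$: for each such $\om$, the closed ray $\gamma$ itself witnesses $g_2^T(\overline{\om})=0$, so $g_1(\om)=0$ produces eigenfunctions whose $L^2$-mass concentrates arbitrarily close to $\gamma$, and a diagonal-compactness argument then delivers the desired quantum limit.

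Concretely, since $\gamma$ is compact in $\Om$, I would pick a decreasing sequence $(V_n)_{n\in\N^*}$ of open neighborhoods of $\gamma$ with $\bigcap_n\overline{V_n}=\gamma$, and set $\om_n=\Om\setminus\overline{V_n}$. Then $\om_n$ is open and $\overline{\om_n}\cap\gamma=\emptyset$, so evaluating the infimum defining $g_2^T(\overline{\om_n})$ at $\gamma$ itself gives $g_2^T(\overline{\om_n})=0$ for every $T>0$, hence $g_2(\overline{\om_n})=0$, and finally $g_1(\om_n)=0$ by \eqref{g1leqg2}.

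Next, I would invoke unique continuation for eigenfunctions of $-\triangle_g$ to note that no nontrivial eigenfunction vanishes identically on the nonempty open set $\om_n$; within any fixed (finite-dimensional) eigenspace, the continuous quotient $\int_{\om_n}|\phi|^2/\|\phi\|_{L^2}^2$ is therefore bounded below by a positive constant. Hence $g_1(\om_n)=0$ forces the existence of eigenfunctions with arbitrarily large eigenvalue and arbitrarily small mass on $\om_n$. A standard diagonal extraction then produces normalized eigenfunctions $(\varphi_n)_{n\in\N^*}$ with strictly increasing eigenvalues satisfying $\int_{\om_n}|\varphi_n|^2\,dv_g<1/n$, so in particular $\int_{\overline{V_n}}|\varphi_n|^2\,dv_g>1-1/n$.

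Finally, the sequence $\mu_n=|\varphi_n|^2\,dv_g$ of probability measures on the compact manifold $\Om$ admits, by Banach--Alaoglu, a weak-$\ast$ convergent subsequence with limit a probability measure $\mu$. Since the $V_n$ are decreasing, $\mu_m(\overline{V_n})\ge 1-1/m$ for every $m\ge n$, so Portmanteau's theorem applied to the closed set $\overline{V_n}$ yields
\begin{equation*}
\mu(\overline{V_n})\;\ge\;\limsup_{m\to+\infty}\mu_m(\overline{V_n})\;=\;1.
\end{equation*}
Letting $n\to+\infty$, $\mu(\gamma)=\mu\bigl(\bigcap_n\overline{V_n}\bigr)=1$, so $\mu$ is supported on $\gamma$. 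Since the $\varphi_n$ lie in pairwise distinct eigenspaces, they can be completed (arbitrarily within each eigenspace) into a Hilbert basis of $L^2(\Om)$ of eigenfunctions of $-\triangle_g$, so $\mu$ is indeed a quantum limit in the sense defined above. The only substantive step is the passage from $(UG)$ to \eqref{g1leqg2}, already contained in the earlier theorems; the rest is routine weak-$\ast$ compactness and diagonal extraction, so I expect no serious obstacle beyond ensuring that \eqref{g1leqg2} is applied to genuinely measurable (and not necessarily open) subsets of $\Om$.
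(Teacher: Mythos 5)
Your argument is correct and is precisely the route the paper intends (it gives no written proof, but states the corollary as a byproduct of Theorem~\ref{spectral_gap1} immediately after deriving \eqref{g1leqg2}): apply $g_1(\om)\le g_2(\overline{\om})$ to complements of shrinking neighborhoods of the closed ray, extract concentrating eigenfunctions, and pass to a weak-$\ast$ limit. The supporting details you supply --- the finite-dimensional lower bound via unique continuation forcing the eigenvalues to infinity, and the completion of the $\varphi_n$ into a Hilbert basis so that the limit is a quantum limit in the stated sense --- are exactly the points that need checking, and they are handled correctly.
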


This is exactly one of the main results of \cite{Macia} which extends a result in \cite{JZ} on the sphere. As a consequence also noted in \cite{Macia}, under the additional assumption that $\Om$ is a Zoll manifold with maximally degenerate Laplacian, any measure invariant under the geodesic flow is a quantum limit. The converse is not true (see \cite{MaciaRiviere_CMP2016}).

\section{Proofs} \label{sec:split}

This section is devoted to prove the results stated in the latter section. In the next paragraph, we establish many results which imply all the results stated in the Introduction. More precisely, 

\begin{itemize}
 \item Theorem \ref{main_finite_time} is a consequence of Lemma \ref{obvious} and Theorem \ref{compacity};
 \item Corollary \ref{obser_carac} is proved in Section \ref{observability_cor};
 \item Theorem \ref{main_long_time} is proved in Section \ref{longtime};
% \item Theorem \ref{computation} is a consequence of Theorem \ref{alom} below;
 \item Corollary \ref{corcomputation} is a consequence of Proposition \ref{conditionH};
 \item Corollaries \ref{obser_carac_explicit} and \ref{cormain} follow from the above the results;
 \item Theorem \ref{spectral_gap1} is proved in Section \ref{proofug}.
\end{itemize}

\subsection{Preliminaries and notations} \label{Prelim}
Let us set $\Lambda=\sqrt{-\triangle}$. Given any $(y^0,y^1)\in L^2(\Omega)\times H^{-1}(\Omega)$, standing for initial conditions for the wave equation, we set 
\begin{equation}\label{def:y+y-}
y^+ = \frac{1}{2}(y^0-i\Lambda^{-1}y^1)\in L^2(\Omega)\quad\text{ and }\quad y^- = \frac{1}{2}(y^0+i\Lambda^{-1}y^1)\in L^2(\Omega).
\end{equation}
The mapping $(y^0,y^1)\in L^2(\Omega)\times H^{-1}(\Omega) \mapsto (y^+,y^-)\in L^2(\Omega)\times L^2(\Omega)$ is an isomorphism, and $\Vert (y^0,y^1)\Vert^2_{L^2\times H^{-1}} = 2 ( \Vert y^+\Vert^2_{L^2}+\Vert y^-\Vert^2_{L^2} )$. The unique solution $y$ of the wave equation \eqref{waveEqobs} associated to the pair of initial data $(y^0,y^1)$ belongs to $C^0(0,T;L^2(\Omega))\cap C^1(0,T;H^{-1}(\Omega))$ and writes $y(t) = e^{it\Lambda}y^+ + e^{-it\Lambda}y^-$.

By definition, we have 
$$
C_T(\om) = \frac{1}{2} \inf_{\Vert y^+\Vert^2_{L^2}+\Vert y^-\Vert^2_{L^2}=1} \int_0^T \int_\Omega \chi_\om(x) \left\vert (e^{it\Lambda} y^+)(x) + (e^{-it\Lambda} y^-)(x) \right\vert^2 \, dv_g(x)\, dt.
$$
Let $a:M \to \R$ be any measurable nonnegative function. We denote (with a slight abuse of notation) by $C_T(a)$ the quantity
$$
C_T(a) = \frac{1}{2} \inf_{\Vert y^+\Vert^2_{L^2}+\Vert y^-\Vert^2_{L^2}=1} \int_0^T \int_\Omega a(x) \left\vert (e^{it\Lambda} y^+)(x) + (e^{-it\Lambda} y^-)(x) \right\vert^2 \, dv_g(x)\, dt.
$$
This way, one has $C_T(\omega)=C_T(\chi_\omega)$.

We have 
\begin{equation}\label{eq1633meuse}
\begin{split}
& \frac{1}{T}\int_0^T \int_\Omega a \vert e^{it\Lambda} y^+ + e^{-it\Lambda} y^- \vert^2 dv_g\, dt \\
&= \frac{1}{T}\int_0^T \big( \langle a e^{it\Lambda}y^+, e^{it\Lambda}y^+\rangle + \langle a e^{-it\Lambda}y^-, e^{it\Lambda}y^-\rangle  
+ \langle a e^{it\Lambda}y^+, e^{-it\Lambda}y^-\rangle + \langle a e^{-it\Lambda}y^-, e^{it\Lambda}y^+
\rangle  \big)dv_g\, dt   \\
&=  \left\langle \frac{1}{T}\int_0^Te^{-it\Lambda} a e^{it\Lambda}\, y^+, y^+\right\rangle + \left\langle \frac{1}{T}\int_0^T e^{it\Lambda} a e^{-it\Lambda}\, dt\ y^-, y^-\right\rangle \\
&\qquad\qquad  + \left\langle \frac{1}{T}\int_0^Te^{it\Lambda} a e^{it\Lambda}\, dt\ y^+, y^-\right\rangle + \left\langle \frac{1}{T}\int_0^Te^{-it\Lambda} a e^{-it\Lambda}\, dt\ y^-, y^+\right\rangle
\end{split}
\end{equation}
where $\langle \cdot, \cdot \rangle$ is the scalar product in $L^2(\Om,v_g)$. Here, $a$ is considered as an operator by multiplication.
This formula suggests to introduce the operators $\bar A_T$ and $\bar B_T$ defined by
$$
\bar A_T(a)=\frac{1}{T}\int_0^T e^{-it\Lambda} a e^{it\Lambda}\, dt \; \hbox{ and } \; 
\bar B_T(a)=\frac{1}{T}\int_0^T e^{it\Lambda} a e^{it\Lambda} dt,
$$ 
so that 
\begin{eqnarray} \label{CT}
%\begin{split}
 C_T(a) =  \inf_{\Vert y^+\Vert^2_{L^2}+\Vert y^-\Vert^2_{L^2}=1}  J^a_{T}(y^+,y^-) 
 %\end{split}
\end{eqnarray}
with 
\begin{equation*}
  J^a_{T}(y^+,y^-)   =  \frac{1}{2}  \Big( \left\langle \bar A_T(a) y^+ , y^+ \right\rangle +  \left\langle \bar A_{-T}(a) y^- , y^- \right\rangle   
 +  \left\langle \bar B_T(a) y^+ , y^- \right\rangle  +  \left\langle \bar B_{-T}(a) y^- , y^+ \right\rangle \Big).
\end{equation*}
Given any $N \in \N$, we extend similarly the definition of $C_T^{>N}(\omega)$ by defining 
\begin{multline*}
C_T^{> N}(a)  = \inf  \{J_T^{a}(y^0,y^1)\mid \langle y^i,\phi_j\rangle_{(H^i)',H^i}=0, \quad \forall i=0,1, \quad \forall j= 1,\ldots,N\\
(y^0,y^1) \in L^2(\Omega)\times H^{-1}(\Omega) \setminus\{(0,0)\} \}  
\end{multline*} 
and $\alpha^T(a)= \lim_{N  \to + \infty}  \frac{1}{T} C_T^{> N}(a)$. In what follows, the index $N$ means that we consider initial conditions involving eigenmodes of index larger than $N$. More precisely, if $y \in H^{-1}(\Om)$, $\langle y_N, \phi_j \rangle_{H^{-1},H^1} = 0$ for every $j \leq N$. The same reasoning as above to obtain \eqref{CT} yields 
\begin{eqnarray} \label{CTN}
J_T^{a}(y^0,y^1) = \frac{1}{2}  J^a_{T}(y^+_N,y^-_N) .
\end{eqnarray}

\subsection{Comments on Assumption $\Hun$}
\begin{proposition} \label{conditionH}
Under $\Hun$ we have $g_2(\mathring{\om})= g_2(\overline{\om})$. 
\end{proposition}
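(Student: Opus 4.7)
The plan is to show that assumption $\Hun$ forces every ray to spend zero time in the topological boundary $B = \overline{\om}\setminus\mathring{\om}$, from which the equality of $g_2^T$ on $\mathring{\om}$ and $\overline{\om}$ (and hence in the limit) is immediate.

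First I would unpack $\Hun$. Since $\chi_{\Om\setminus B} = 1 - \chi_B$, the quantity
\[
\frac{1}{T}\int_0^T \chi_{\Om\setminus B}(\gamma(t))\, dt
\]
is bounded above by $1$, and its infimum over $\gamma\in\Gamma$ equals $1$ by $\Hun$. Being an infimum of numbers bounded above by the attained value $1$, it must equal $1$ for \emph{every} $\gamma\in\Gamma$. Equivalently,
\[
\int_0^T \chi_B(\gamma(t))\, dt = 0 \qquad \text{for every } \gamma\in\Gamma,
\]
i.e.\ no ray is grazing $\om$ on $[0,T]$. Using that the geodesic flow is a group and that time-shifted rays are rays, one checks by chunking that this property propagates to every time horizon $T'>0$, so in fact $\int_0^{T'}\chi_B(\gamma(t))\, dt=0$ for every $\gamma\in\Gamma$ and every $T'>0$.

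Next, from the disjoint decomposition $\overline{\om} = \mathring{\om}\sqcup B$ one has the pointwise identity $\chi_{\overline{\om}} = \chi_{\mathring{\om}} + \chi_B$, hence for any ray $\gamma$ and any $T'>0$,
\[
\frac{1}{T'}\int_0^{T'}\chi_{\overline{\om}}(\gamma(t))\, dt
= \frac{1}{T'}\int_0^{T'}\chi_{\mathring{\om}}(\gamma(t))\, dt + \frac{1}{T'}\int_0^{T'}\chi_B(\gamma(t))\, dt.
\]
The last term vanishes by the previous paragraph, so the averaged occupation times in $\overline{\om}$ and $\mathring{\om}$ agree along every ray. Taking the infimum over $\gamma\in\Gamma$ yields $g_2^{T'}(\overline{\om}) = g_2^{T'}(\mathring{\om})$ for every $T'>0$, and letting $T'\to+\infty$ gives $g_2(\overline{\om}) = g_2(\mathring{\om})$.

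The only delicate point is the step where I conclude from $g_2^T(\Om\setminus B)=1$ that the underlying integral is identically equal to $1$ along every ray, and that this extends to all time horizons; it relies on $\chi_{\Om\setminus B}\leq 1$ together with the invariance of $\Gamma$ under time translation. Once this is granted the rest is purely set-theoretic bookkeeping.
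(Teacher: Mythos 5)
Your proof is correct, and it takes a mildly but genuinely different route from the paper's. The paper works entirely at the level of infima: it picks an $\varepsilon$-near-minimizing ray for $g_2^T(\mathring{\om})$, writes $\chi_{\mathring{\om}} = \chi_{\overline{\om}} + \chi_{\Om\setminus(\overline{\om}\setminus\mathring{\om})} - 1$ along that ray, and bounds each term from below by the corresponding $g_2^T$, using $\Hun$ only in the form $g_2^T(\Om\setminus B)=1$ to cancel the $-1$. You instead upgrade $\Hun$ to a per-ray statement: since each average occupation time in $\Om\setminus B$ is at most $1$ and their infimum is $1$, \emph{every} ray spends zero time in $B=\overline{\om}\setminus\mathring{\om}$ --- which is exactly the ``no grazing ray'' interpretation the paper gives informally after Corollary \ref{corcomputation} but never proves. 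From there the identity $\chi_{\overline{\om}}=\chi_{\mathring{\om}}+\chi_B$ makes the two occupation times agree ray by ray, so the infima coincide trivially. Your version buys two things: the intermediate statement is stronger and more geometrically transparent, and your chunking argument (using invariance of $\Gamma$ under time translation) explicitly propagates the conclusion from the fixed $T$ in $\Hun$ to every horizon $T'$, which is what the statement about the limits $g_2(\mathring{\om})=g_2(\overline{\om})$ actually requires; the paper's proof only establishes the equality at the single time $T$ and leaves the passage to the limit implicit. The one cosmetic slip is the phrase ``the attained value $1$'': the infimum need not be attained a priori; what you actually use is that each term is $\leq 1$ and $\geq$ the infimum, which is $1$, and that reasoning is sound.
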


\begin{proof} 
Let  $\ep>0$. Without loss of generality we assume that $\om$ is open. By definition of the infimum in the definition of $g_2^T(\om)$, for every $\varepsilon>0$ there exists a ray $\gamma\in\Gamma$ such that 
$$\begin{aligned} 
g_2^T(\mathring{\om}) + \ep & \geq \frac{1}{T} \int_{0}^{T} \chi_{\om} (\gamma(t) ) \, dt  = \frac{1}{T} \int_{0}^{T} \chi_{\overline{\om}} (\gamma(t) )\,  dt - \frac{1}{T} \int_{0}^{T} \chi_{\overline{\om} \setminus \om}(\gamma(t) ) \, dt \\
   & = \frac{1}{T} \int_{0}^{T} \chi_{\overline{\om}} (\gamma(t) ) \, dt + \frac{1}{T} \int_{0}^{T} \chi_{\Om \setminus (\overline{\om} \setminus \om)} (\gamma(t) ) \, dt -1 \\
      & \geq g_2^T(\overline{\om}) + g_2^T( \Om \setminus (\overline{\om} \setminus \om)) -1 
    \geq g_2^T(\overline{\om})
  \end{aligned}
$$
and thus $g_2^T(\mathring{\om}) \geq g_2^T(\overline{\om})$. The converse inequality is obvious.
\end{proof}

\subsection{Upper bound for $C_T$} 

\begin{lemma} \label{obvious} 
For every Lebesgue measurable subset $\omega$ of $M$, one has
$$
 \frac{C_T(\omega)}{T} \leq \min\left( \frac{1}{2} g_1(\omega),\alpha^T(\omega)\right).
$$
\end{lemma}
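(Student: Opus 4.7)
The plan is to establish the two upper bounds $C_T(\omega)/T \leq \alpha^T(\omega)$ and $C_T(\omega)/T \leq \tfrac12 g_1(\omega)$ separately, each by a direct application of the variational definition of $C_T(\omega)$ to a suitable class of test initial data.

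The first bound is essentially tautological. By definition, $C_T^{>N}(\omega)$ is the infimum of the same Rayleigh quotient $J_T^\omega$ defining $C_T(\omega)$, but taken over the strictly smaller set of admissible data whose $L^2$ and $H^{-1}$ components are orthogonal to $\phi_1,\ldots,\phi_N$. Restricting the admissible set can only increase the infimum, so $C_T^{>N}(\omega) \geq C_T(\omega)$ for every $N \in \N$. Passing to the limit in $N$ and dividing by $T$ gives $\alpha^T(\omega) \geq C_T(\omega)/T$.

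The second bound is obtained by testing on pure eigenmodes. Given a nonconstant eigenfunction $\phi \in \mathcal{E}$ with $-\triangle_g \phi = \lambda^2 \phi$ and $\lambda > 0$, I would take the complex initial data
$$
(y^0,y^1) = (\phi,\, i\lambda \phi) \in L^2(\Omega)\times H^{-1}(\Omega),
$$
for which the unique solution of \eqref{waveEqobs} is the monochromatic wave $y(t,x) = e^{i\lambda t}\phi(x)$, whose modulus $|\phi(x)|$ is constant in time. Using the decomposition from Section \ref{Prelim}, a quick computation (exploiting $\Lambda^{-1}\phi = \phi/\lambda$) gives $y^+ = \phi$ and $y^- = 0$, so that $\Vert(y^0,y^1)\Vert_{L^2\times H^{-1}}^2 = 2\Vert\phi\Vert_{L^2}^2$. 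The numerator of $J_T^\omega(y^0,y^1)$ is simply $T\int_\omega |\phi|^2\, dv_g$, and hence
$$
J_T^\omega(y^0,y^1) = \frac{T}{2}\cdot\frac{\int_\omega |\phi|^2\, dv_g}{\int_\Omega |\phi|^2\, dv_g}.
$$
Taking the infimum over $\phi \in \mathcal{E}$ yields $C_T(\omega) \leq \tfrac{T}{2} g_1(\omega)$.

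I do not expect any real obstacle here: the lemma is elementary and amounts to checking the right test functions. The only minor point worth noting is that the test $(y^0,y^1) = (\phi, i\lambda \phi)$ is complex-valued, which is consistent with the framework set up in Section \ref{Prelim} where $y(t) = e^{it\Lambda}y^+ + e^{-it\Lambda}y^-$ is a priori complex, and does not require restricting to real data in the definition of $C_T(\omega)$. Combining the two inequalities gives the stated lemma.
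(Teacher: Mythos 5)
Your proof is correct and follows essentially the same route as the paper: the bound by $\alpha^T(\omega)$ comes from the monotonicity $C_T(\omega)\leq C_T^{>N}(\omega)$ and passing to the limit in $N$, while the bound by $\tfrac12 g_1(\omega)$ comes from testing on the monochromatic solutions $e^{it\Lambda}\phi=e^{i\lambda t}\phi$. Your explicit verification that $y^+=\phi$, $y^-=0$ and $\Vert(y^0,y^1)\Vert^2_{L^2\times H^{-1}}=2\Vert\phi\Vert^2_{L^2}$ just makes the paper's one-line computation explicit.
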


\begin{proof}
By considering particular solutions of the form $e^{it\Lambda}\phi_j$ for a given $j\in\N^*$, we obtain $\frac{C_T(\omega)}{T} \leq \frac{1}{2} g_1(\omega)$. Besides, we have $C_T(\omega) \leq C_T^{>N}(\omega)$ and letting $N$ tend to $+\infty$, we get
 $C_T(\omega) \leq \al^T(\omega)$.
\end{proof}

\subsection{The high-frequency observability constant $\al^T$}
The quantity $g_2^T$ has been defined for measurable subsets $\om$, but similarly to what has been done in Section \ref{Prelim}, we extend its definition to arbitrary measurable nonnegative bounded functions $a:M \to \R$, by setting
$$
g_2^T(a) =  \inf_{\gamma\in\Gamma} \frac{1}{T} \int_0^T a(\gamma(t))\, dt .
$$
With this notation, we have $g_2^T(\chi_\om)=g_2^T(\om)$, with a slight abuse of notation.

\begin{theorem} \label{main_a}
For every continuous nonnegative function $a:M \to \R$, we have
 $$\alpha^T(a) = \frac{1}{2} g_2^T(a).$$ 
\end{theorem}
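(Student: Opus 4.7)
The plan is to prove the two inequalities $\alpha^T(a) \leq \tfrac12 g_2^T(a)$ and $\alpha^T(a) \geq \tfrac12 g_2^T(a)$ separately, using two microlocal ingredients. The first, from Egorov's theorem, is that for $a \in C^\infty(\Omega)$ the conjugate $e^{-it\Lambda} a\, e^{it\Lambda}$ is pseudodifferential of order $0$ with principal symbol $a \circ \pi \circ \varphi_t$ on $T^*\Omega \setminus \{0\}$, where $\varphi_t$ denotes the Hamiltonian flow of $|\xi|_g$ and $\pi:T^*\Omega \to \Omega$ the base projection. Averaging yields that $\bar A_T(a)$ has principal symbol
$$
\bar a_T(x,\xi) = \frac{1}{T} \int_0^T a\bigl(\pi \varphi_t(x,\xi)\bigr)\, dt,
$$
which on $S^*\Omega$ is bounded below by $g_2^T(a)$ by definition of $g_2^T$. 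The second ingredient controls the cross-terms: introducing $b = -i \int_0^{+\infty} e^{-s\Lambda} a\, e^{-s\Lambda}\, ds$, which solves $\Lambda b + b\Lambda = -ia$, one obtains the closed form $\bar B_T(a) = \tfrac{1}{T}\bigl(e^{iT\Lambda} b\, e^{iT\Lambda} - b\bigr)$. Since $\|e^{-s\Lambda} P_N^\perp\|_{\mathrm{op}} \leq e^{-s\lambda_{N+1}}$ with $P_N^\perp$ the projection onto $\mathcal{H}_N := \overline{\mathrm{span}}\{\phi_j : j > N\}$, one gets $\|b|_{\mathcal{H}_N}\|_{\mathrm{op}} \leq \|a\|_\infty / \lambda_{N+1}$, hence $\|\bar B_{\pm T}(a)|_{\mathcal{H}_N}\|_{\mathrm{op}} = O(\lambda_{N+1}^{-1}) \to 0$.

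\textbf{Upper bound.} Fix $\varepsilon > 0$ and choose $(x_0,\xi_0) \in S^*\Omega$ generating a ray $\gamma$ with $\bar a_T(x_0,\xi_0) \leq g_2^T(a) + \varepsilon$. I would construct Gaussian beam quasi-modes $(\phi_k)$ of unit $L^2$ norm microlocally concentrated at $(x_0,\xi_0)$ with characteristic frequency tending to $+\infty$. Propagation transports this concentration along $\varphi_t$, so by continuity of $a$,
$$
\langle \bar A_T(a) \phi_k, \phi_k \rangle = \frac{1}{T} \int_0^T \langle a\, e^{it\Lambda}\phi_k, e^{it\Lambda}\phi_k\rangle\, dt \xrightarrow[k \to \infty]{} \bar a_T(x_0,\xi_0) \leq g_2^T(a)+\varepsilon.
$$
For $k$ large the (normalized) projection $\hat\phi_k = P_N^\perp \phi_k/\|P_N^\perp \phi_k\|$ belongs to $\mathcal{H}_N$ and satisfies the same limit. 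Testing the infimum defining $C_T^{>N}(a)$ with $(y^+, y^-) = (\hat\phi_k, 0)$ kills the four cross-terms in $J_T^a$ and yields $C_T^{>N}(a) \leq \tfrac12 \langle \bar A_T(a)\hat\phi_k, \hat\phi_k\rangle$; passing to the limit and then letting $\varepsilon \to 0$ gives $\alpha^T(a) \leq \tfrac12 g_2^T(a)$.

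\textbf{Lower bound.} For $a \in C^\infty(\Omega)$, apply the sharp G\aa rding inequality to $\bar A_T(a) - (g_2^T(a) - \varepsilon)\operatorname{Id}$, whose principal symbol is everywhere $\geq \varepsilon > 0$ on $S^*\Omega$:
$$
\langle \bar A_T(a) y, y\rangle \geq (g_2^T(a) - \varepsilon)\|y\|_{L^2}^2 - C_\varepsilon \|y\|_{H^{-1/2}}^2, \qquad y \in L^2(\Omega),
$$
the remainder on $\mathcal{H}_N$ being bounded by $C_\varepsilon \lambda_{N+1}^{-1}\|y\|^2_{L^2} \to 0$. The same estimate holds for $\bar A_{-T}(a)$ since the set $\Gamma$ of rays is invariant under time reversal. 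Combining with the operator-norm decay of $\bar B_{\pm T}(a)|_{\mathcal{H}_N}$ from the strategy above, for $(y^+, y^-) \in \mathcal{H}_N^2$ with $\|y^+\|^2 + \|y^-\|^2 = 1$,
$$
J_T^a(y^+, y^-) \geq \tfrac12 (g_2^T(a) - \varepsilon) + o_{N \to \infty}(1),
$$
which yields $\alpha^T(a) \geq \tfrac12 g_2^T(a)$. The extension to continuous $a$ follows from smooth approximation $a_n \to a$ in sup norm, using $|C_T^{>N}(a) - C_T^{>N}(a_n)| \lesssim \|a - a_n\|_\infty$ and the analogous continuity of $g_2^T$.

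\textbf{Main obstacle.} The hard part will be establishing the operator-norm decay $\|\bar B_{\pm T}(a)|_{\mathcal{H}_N}\|_{\mathrm{op}} \to 0$: the scalar prefactor $(e^{iT(\lambda_j+\lambda_k)}-1)/(iT(\lambda_j+\lambda_k))$ is pointwise bounded by $1/(T\lambda_{N+1})$ for indices $j,k > N$, but this entrywise estimate does not directly yield an operator-norm bound, and the integration-by-parts representation via the smoothing operator $b$ solving $\Lambda b + b\Lambda = -ia$ is what makes the argument clean. The remaining ingredients (Egorov's theorem, sharp G\aa rding, Gaussian beam construction, and uniform $C^0$-approximation) are by now standard.
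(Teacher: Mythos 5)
Your proof is correct and follows essentially the same route as the paper's: Egorov's theorem to identify the principal symbol $\bar a_T$ of $\bar A_T(a)$, the (sharp) G\aa rding inequality restricted to high frequencies for the lower bound, coherent-state/Gaussian-beam test data with the low modes projected out and $y^-=0$ for the upper bound, and uniform approximation to pass from smooth to merely continuous $a$. The one genuinely different ingredient is your treatment of the cross terms: the paper disposes of $\bar B_{\pm T}(a)$ by asserting (with a reference to Dehman--Le Rousseau--L\'eautaud) that it is a pseudodifferential operator of order $-1$, hence compact, whereas you solve the anti-commutator equation $\Lambda b+b\Lambda=-ia$ with $b=-i\int_0^{\infty}e^{-s\Lambda}a\,e^{-s\Lambda}\,ds$, obtain the closed form $\bar B_T(a)=\tfrac1T\left(e^{iT\Lambda}b\,e^{iT\Lambda}-b\right)$, and deduce the explicit bound $\Vert \bar B_{\pm T}(a)P_N^{\perp}\Vert \leq 2\Vert a\Vert_\infty/(T\lambda_{N+1})$. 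This is more elementary and quantitative: it avoids any averaging/FIO calculus for $\bar B_T$, works for bounded measurable $a$, and even gives decay in $T$; the only (harmless) requirement is invertibility of $\Lambda$, which holds here since the paper works with $0<\lambda_1$, i.e., on the orthogonal complement of constants. The one place where you are sketchier than the paper is the assertion that the normalized high-frequency truncation $\hat\phi_k=P_N^{\perp}\phi_k/\Vert P_N^{\perp}\phi_k\Vert$ of the coherent state still concentrates at $(x_0,\xi_0)$; the paper justifies this with the explicit estimate $\vert\langle u_k,\phi_j\rangle\vert=O(N/k^{n/4})$ obtained from Weyl's law and sup-norm bounds on eigenfunctions, but this is a routine detail that your argument can absorb.
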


\begin{proof}
We first assume that the function $a:M \to \R$ is smooth and thus can be considered as the symbol of an pseudo-differential  $\Op(a)$ of order $0$ corresponding to the multiplication by $a$. We have
 $$\bar A_T(a)=\frac{1}{T}\int_0^T e^{-it\Lambda} \Op(a) e^{it\Lambda}\, dt \; \hbox{ and } \; 
\bar B_T(a)=\frac{1}{T}\int_0^T e^{it\Lambda} \Op(a) e^{it\Lambda} dt.$$ 
According to the Egorov theorem (see \cite{Egorov,zworski}), the pseudo-differential operators $\bar A_T$ and $\bar A_{-T}$ are of order $0$ and their principal symbols are respectively 
  $$
  \bar a_T = \frac{1}{T}\int_0^T a\circ\varphi_t\, dt \; \hbox{ and } \bar a_{-T} = \frac{1}{T}\int_0^T a\circ\varphi_{-t}\, dt,
  $$ 
 where $(\varphi_t)_{t\in \R}$ is the Riemannian geodesic flow. Besides, 
 $$
 \bar B_T(a)=\frac{1}{T}\int_0^Te^{it\Lambda} \Op(a) e^{it\Lambda}\, dt \quad \hbox{ and } \quad \bar B_{-T}(a)=\frac{1}{T}\int_0^Te^{-it\Lambda} \Op(a) e^{-it\Lambda}\, dt$$
 are pseudo-differential operators of order $-1$ and hence are compact (see \cite[Section \ref{Prelim}]{Dehman_LeRousseau_leautaud}). 
% Note that, since $a$ is a function on $M$ (and thus, $\Op(a)u=au$), we should rather write $\bar a_T = \frac{1}{T}\int_0^T a\circ\pi\circ\phi_t\, dt$ or, denoting $\pi\circ\phi_t(x,\xi)=\gamma_{x,\xi}(t)$ (projection of the geodesic, or ray), we have $\bar a_T(x,\xi) = \frac{1}{T}\int_0^T a(\gamma_{x,\xi}(t))\, dt$.

Defining $y_+$ by \eqref{def:y+y-} and $y^+_N$ as in \eqref{CTN}, we compute (as in \eqref{eq1633meuse})
\begin{multline*}
\frac{1}{T}\int_0^T \int_\Omega a \vert e^{it\Lambda} y_N^+ + e^{-it\Lambda} y_N^- \vert^2 dv_g\, dt \\
= \left\langle \bar A_T(a) y^+_N , y^+_N \right\rangle +  \left\langle \bar A_{-T}(a) y^-_N , y^-_N \right\rangle  +\left\langle \bar B_T(a) y^-_N , y^+_N \right\rangle +  \left\langle \bar B_{-T}(a) y^+_N , y^-_N \right\rangle .
\end{multline*}
Considering for instance the first term at the right-hand side, we have
$$
\left\langle \bar A_T(a) y^+_N , y^+_N \right\rangle  = \left\langle \frac{1}{T}\int_0^Te^{-it\Lambda} \Op(a) e^{it\Lambda}\, dt\ y^+_N, y^+_N\right\rangle = \langle \Op(\bar a_T) y^+_N, y^+_N\rangle  + \langle K_T y^+_N, y^+_N\rangle
$$
where $K_T$ is a pseudo-differential operator of order $-1$ (depending on $a$) and thus $\vert\langle K_T y^+_N, y^+_N\rangle\vert\leq \Vert K_T\Vert \Vert y^+_N\Vert_{L^2}\Vert y^+_N\Vert_{H^{-1}}$. 
It follows from \eqref{CTN} that
\begin{equation*}
\frac{1}{T}C_T^{>N}(a) = \frac{1}{2} \inf_{\Vert y^+_N\Vert^2_{L^2}+\Vert y^-_N\Vert^2_{L^2}=1 }  \Big(  \langle \Op(\bar a_T) y^+_N, y^+_N\rangle + \langle \Op(\bar a_{-T}) y^-_N, y^-_N\rangle \Big) + \mathrm{o}(1)\qquad \text{as }N\rightarrow +\infty.
\end{equation*}

Let us  first prove that $\alpha^T(a)\geq \frac{1}{2}g_2^T(a)$. Denote by $S^* \Om$ the unit cotangent bundle over $ \Om$. 
By definition, we have $\bar a_T(x,\xi)\geq g_2^T(a)$ for every $(x,\xi)\in S^*\Om$ (and similarly, $\bar a_{-T}(x,\xi)\geq g_2^T(a)$), and since the symbol $\bar a_T$ is real and of order $0$, it follows from the G\aa rding inequality (see \cite{zworski}) that for every $\varepsilon>0$ there exists $C_\varepsilon>0$ such that
$$
\langle \Op(\bar a_T) y^+_N, y^+_N\rangle \geq  (g_2^T(a)-\varepsilon) \Vert y^+_N\Vert^2_{L^2} - C_\varepsilon \Vert y^+_N\Vert^2_{H^{-1/2}} 
$$
for every $y^+_N\in L^2(\Omega)$ (actually, one can even take $\varepsilon=0$ by using a positive quantization, for instance $\Op^+$).
Since the spectral expansion of $y_N^+$ involves only modes with indices larger than $N$, we have $\Vert y^+_N\Vert^2_{H^{-1/2}} \leq \frac{1}{\lambda_N}\Vert y^+_N\Vert^2_{L^2}$ and it follows that, when considering the infimum over all possible $y_N^\pm$ of $L^2$ norm equal to $1$, all remainder terms provide a remainder term $\mathrm{o}(1)$ as $N\rightarrow +\infty$, uniformly with respect to $y_N^\pm$.
We conclude that $C_T^{>N}(a)\geq \frac{1}{2} g_2^T(a) + \mathrm{o}(1)$, and thus $\alpha^T(a)\geq \frac{1}{2} g_2^T(a)$.

Let us now prove that $\alpha^T(a)\leq \frac{1}{2} g_2^T(a)$. The idea is to choose some appropriate  $y_N^+\in L^2(\Omega)$, and $y_N^-=0$, and to write that $\frac{1}{T}C_T^{>N}(a) \leq \frac{1}{2} \langle \Op(\bar a_T) y^+_N, y^+_N\rangle  + \mathrm{o}(1)$.
The choice of an appropriate $y_N^+$ is guided by the following lemma on coherent states.

\begin{lemma}\label{lem_coherent_state}
Let $x_0\in\R^n$, $\xi_0\in\R^n$, and $k\in\N^*$. We define the coherent state
$$
u_k(x) = \left(\frac{k}{\pi}\right)^\frac{n}{4} e^{ik(x-x_0).\xi_0-\frac{k}{2}\Vert x-x_0\Vert^2}.
$$
Then $\Vert u_k\Vert_{L^2}=1$, and for every symbol $a$ on $\R^n$ of order $0$, we have
$$
\mu_k(a) = \langle\Op(a)u_k,u_k\rangle_{L^2} = a(x_0,\xi_0) + \mathrm{o}(1),
$$
as $k\rightarrow +\infty$. In other words, $(\mu_k)_{k\in \N}$ converges in the sense of measures to $\delta_{(x_0,\xi_0)}$.
\end{lemma}

\noindent Admitting temporarily this (well known) lemma, we are going to define $y_N^+$ as an approximation of $u_k$, having only frequencies larger than $N$. 
Let $(x_0,\xi_0) \in S^*M$ be a minimizer of $\bar a_T$, i.e., $g_2^T(a) = \min \bar a_T = \bar a_T(x_0,\xi_0)$.
We consider the above solution $u_k$, defined on $M$ in a local chart around $(x_0,\xi_0)$ (we multiply the above expression by a function of compact support taking the value $1$ near $(x_0,\xi_0)$, and we adapt slightly the constant so that we still have $\Vert u_k\Vert_{L^2}=1$). Note that $\int_\Omega u_k \, dv_g = \frac{2^\frac{n}{2}\pi^\frac{n}{4}}{k^\frac{n}{4}}$.
Now, we set
$$
\pi_N u_k = \sum_{j=1}^N \langle u_k,\phi_j\rangle \phi_j = \sum_{j=1}^N \int_\Omega u_k(x)\phi_j(x)\, dx\ \phi_j dv_g(x).
$$
By usual Sobolev estimates and by the Weyl law, there exists $C>0$ such that $\Vert\phi_j\Vert_{L^\infty(\Omega)}\leq C \lambda_j^{\frac{n}{2}}$ and $\lambda_j\sim j^{\frac{2}{n}}$ for every $j\in\N^*$, hence $\Vert\phi_j\Vert_{L^\infty(\Omega)}\leq C j$. We infer that 
$$
\vert\langle u_k,\phi_j\rangle\vert\leq CN \int_\Omega \vert u_k\vert\leq C2^\frac{n}{2}\pi^\frac{n}{4}\frac{N}{k^\frac{n}{4}} dv_g(x)
$$ 
for every $j\leq N$.

Let $\varepsilon>0$ be arbitrary. Choosing $k$ large enough so that $C 2^\frac{n}{2}\pi^\frac{n}{4}\frac{N^2}{k^\frac{n}{4}}\leq \varepsilon$, we have $\Vert \pi_N u_k\Vert_{L^2}\leq\varepsilon$.
We set $y_N^+ = u_k - \pi_N u_k$.
We have 
\begin{equation*}
\langle \Op(\bar a_T) y^+_N, y^+_N\rangle
= \underbrace{\langle \Op(\bar a_T) u_k, u_k\rangle}_{\simeq g_2^T(a)} + \underbrace{\langle \Op(\bar a_T) \pi_N u_k, \pi_N u_k\rangle}_{\leq \varepsilon^2\max\bar a_T} - \underbrace{\langle \Op(\bar a_T) \pi_N u_k, u_k\rangle}_{\vert\cdot\vert\leq \varepsilon\max\bar a_T} - \underbrace{\langle \Op(\bar a_T) u_k, \pi_N u_k\rangle}_{\vert\cdot\vert\leq \varepsilon\max\bar a_T} 
\end{equation*}
and the conclusion follows.
\end{proof}

\begin{proof}[Proof of Lemma \ref{lem_coherent_state}.]
This lemma can be found for instance in \cite[Chapter 5, Example 1]{zworski}. We include a proof for the sake of completeness. 
First of all, we compute\footnote{Here, we use the fact that $\int_{\R^n} e^{-\alpha\Vert x\Vert^2}\, dx = \left(\frac{\pi}{\alpha}\right)^\frac{n}{2}$.}
$
\Vert u_k\Vert_{L^2}^2 = \left(\frac{k}{\pi}\right)^\frac{n}{2} \int e^{-\frac{k}{2}\Vert x-x_0\Vert^2}\, dx = 1.
$
Now, by definition, we have
\begin{multline*}
\langle\Op(a)u_k,u_k\rangle_{L^2}
= \int \Op(a)u_k(x) \overline{u_k(x)}\, dx 
= \frac{1}{(2\pi)^n} \iiint e^{i(x-y).\xi} a(x,\xi) u_k(y) \overline{u_k(x)}\, dx\, dy\, d\xi \\
= \frac{k^n}{(2\pi)^n} \iiint e^{ik(x-y).\xi} a(x,\xi) u_k(y) \overline{u_k(x)}\, dx\, dy\, d\xi
\end{multline*}
by the change of variable $\xi\mapsto k\xi$, and using the homogeneity of $a$. Then we get
\begin{equation*}
\begin{split}
\langle\Op(a)u_k,u_k\rangle_{L^2}
&= \frac{k^\frac{3n}{2}}{2^n\pi^\frac{3n}{2}} \iiint a(x,\xi) e^{ik(x-y).\xi} e^{ik(y-x).\xi_0} e^{-\frac{k}{2} (  \Vert x-x_0\Vert^2 + \Vert y-x_0\Vert^2 )} \, dx\, dy\, d\xi \\
&= \frac{k^\frac{3n}{2}}{2^n\pi^\frac{3n}{2}} \iint a(x,\xi) e^{-\frac{k}{2} \Vert x-x_0\Vert^2} \int e^{ik(x-y).\xi} e^{ik(y-x).\xi_0} e^{-\frac{k}{2} \Vert y-x_0\Vert^2 } \, dy \, dx\, d\xi .
\end{split}
\end{equation*}
Noting that $\mathcal{F}(e^{-\alpha\Vert x\Vert^2})(\xi) = \left(\frac{\pi}{\alpha}\right)^\frac{n}{2}e^{-\frac{\Vert\xi\Vert^2}{4\alpha}}$, we obtain
\begin{multline*}
\int e^{ik(x-y).\xi} e^{ik(y-x).\xi_0} e^{-\frac{k}{2} \Vert y-x_0\Vert^2 } \, dy
= e^{ik(x-x_0).(\xi-\xi_0)} \int e^{-ik(y-x_0).(\xi-\xi_0)} e^{-\frac{k}{2} \Vert y-x_0\Vert^2 } \, dy \\
= e^{ik(x-x_0).(\xi-\xi_0)} \int e^{-iky.(\xi-\xi_0)} e^{-\frac{k}{2} \Vert y\Vert^2 } \, dy 
= e^{ik(x-x_0).(\xi-\xi_0)} \mathcal{F}(e^{-\frac{k}{2}\Vert y\Vert^2})(k(\xi-\xi_0)) \\
= \left( \frac{2\pi}{k}\right)^\frac{n}{2} e^{ik(x-x_0).(\xi-\xi_0)} e^{-\frac{k}{2}\Vert \xi-\xi_0\Vert^2}
\end{multline*}
and therefore,
\begin{equation*}
\begin{split}
\langle\Op(a)u_k,u_k\rangle_{L^2}
&= \frac{k^n}{2^\frac{n}{2}\pi^n} \iint a(x,\xi) e^{ik(x-x_0).(\xi-\xi_0)} e^{-\frac{k}{2} (\Vert x-x_0\Vert^2 + \Vert \xi-\xi_0\Vert^2 )}\, dx\, d\xi  \\
&= \frac{k^n}{2^\frac{n}{2}\pi^n} a(x_0,\xi_0) \iint e^{ik(x-x_0).(\xi-\xi_0)} e^{-\frac{k}{2} (\Vert x-x_0\Vert^2 + \Vert \xi-\xi_0\Vert^2 )}\, dx\, d\xi  +  \mathrm{o}(1) \\
&= c_n a(x_0,\xi_0) +  \mathrm{o}(1)
\end{split}
\end{equation*}
as $k\rightarrow +\infty$. Moreover, taking $a=1$ above, we see that
$c_n = \iint e^{ikx.\xi} e^{-\frac{k}{2} (\Vert x\Vert^2 + \Vert \xi\Vert^2 )}\, dx\, d\xi = 1$.
The lemma is proved.
\end{proof}

It remains to extend the statement to the case where $a$ is continuous only. It is obvious from the definitions of $\alpha^T$ and $g_2^T$ that if $(a_k)_{k\in\N}$ is sequence of nonnegative smooth functions converging uniformly to $a$, then 
$$
\lim_{k\to +\infty} \al^T(a_k)=\al^T(a) \; \hbox{ and } \; \lim_{k\to +\infty} g_2^T(a_k)=g_2(a).
$$
Indeed, this is a consequence of the two following facts: 
\begin{itemize}
\item the supremum of $\frac{1}{T} \int_0^T \int_\Omega |a_k-a| y^2 \, dv_g\, dt$ over the set of all functions $y$ satisfying $\|y\|_{L^2}=1$ tends to $0$ as $k\to +\infty$;
\item the supremum of $\frac{1}{T} \int_0^T |a_k-a|(\gamma(t)) dt$ over the set of all rays $\gamma$ tends to $0$ as $k\to +\infty$.
\end{itemize}
The theorem is proved.

\begin{remark}
Note that $e^{it\Lambda}u_k$ (or, accordingly, $e^{it\Lambda}(u_k-\pi_Nu_k)$) is a half-wave Gaussian beam along the geodesic $\varphi_t(x_0,\xi_0)$.
Indeed, for any symbol of order $0$, recalling that $A_t = e^{-it\Lambda}\Op(a)e^{it\Lambda}$ has $a_t = a\circ\varphi_t$ as principal symbol, we have $\langle\Op(a)e^{it\Lambda}u_k,e^{it\Lambda}u_k\rangle = \langle A_t u_k,u_k\rangle = \langle \Op(a_t) u_k,u_k\rangle+\mathrm{o}(1) = a_t(x_0,\xi_0)+\mathrm{o}(1)$ (by Lemma \ref{lem_coherent_state}), which means that $e^{it\Lambda}u_k$ is microlocally concentrated around $\varphi_t(x_0,\xi_0)$.
\end{remark}

\subsection{Proof of Theorem \ref{computation}}
Consider an increasing sequence $(h_k)_{k\in\N}$ of continuous functions  such that $0 \leq h_k \leq 1$ in $\Om$, $h_k(x)=0$ if $\hbox{dist}(x, \Om \setminus \mathring{\om}) \leq \frac{1}{k}$ and $h_k(x) = 1$ if   $\hbox{dist}(x, \Om \setminus \mathring{\om}) \geq \frac{2}{k}$. Note that $0\leq h_k\leq h_{k+1}\leq \chi_{\mathring\om}$ for every $k\in\N$. Let us prove that 
\begin{equation} \label{ineg_lemmalimsup}
g_2^T(\mathring\om) = \lim_{k\to +\infty} g_2^T(h_k ).
\end{equation}
The fact that  $g_2^T(\mathring{\om}) \geq  \limsup_{k\to +\infty} g_2^T(h_k )$ is obvious since $\chi_{\mathring{\om}} \geq h_k$ for all $k\in \N$. 
Consider a sequence of rays $\gamma_k:[0,T] \to \Om$ such that 
\begin{eqnarray} \label{g2hk}
 g_2^T(h_k) \geq \frac{1}{T} \int_0^T h_k(\gamma_k(t))\, dt + \operatorname{o}(1)\quad \textrm{as }k\to +\infty.
\end{eqnarray}
The set of rays is compact since each ray is determined by it position $x \in \Om$ at time $0$ and its derivative at time $0$ which lies on the unit cotangent bundle of $\Om$. Hence there exists $\gamma:[0,T] \to \Om$ such that 
$\gamma_k \to \gamma$ uniformly on $[0,T]$. 
For any $t \in [0,T]$, one has 
$$
\liminf_{k\to +\infty} h_k(\gamma_k(t)) \geq \chi_{\mathring{\om}}(\gamma(t)). 
$$
Indeed, if $\gamma(t) \in {\mathring{\om}}$, then since ${\mathring{\om}}$ is open, $h_k(\gamma_k(t))=1 = \chi_{\mathring{\om}}(\gamma(t))$ as soon as $k$ is large enough. If $\gamma(t) \not\in {\mathring{\om}}$, the inequality is obvious since $\chi_{\mathring{\om}}(\gamma(t)) = 0$. 
By dominated convergence, we infer from \eqref{g2hk} that
$$
g_2^T(h_k) \geq \frac{1}{T} \int_0^T h_k(\gamma_k(t))\, dt + \operatorname{o}(1) \geq \frac{1}{T} \int_0 \chi_{\mathring{\om}}(\gamma(t))\, dt + \operatorname{o}(1) \geq g_2^T({\mathring{\om}}) +\operatorname{o}(1) \quad \textrm{as }k\to +\infty ,
$$
which proves \eqref{ineg_lemmalimsup}. 

Using that the sequence $(h_k)_{k\in \N}$ is increasing and since each $h_k$ is continuous, we obtain
$$
\frac{1}{2} g_2^T(\mathring{\om}) = \lim_{k\to +\infty} \frac{1}{2}  g_2^T(h_k) = \lim_{k\to +\infty} \al^T(h_k) \leq \al^T(\mathring{\om}) \leq  \al^T(\om) \leq \al^T(\bar\om).
$$

To conclude the proof of Theorem \ref{computation}, it remains to prove that 
\begin{equation} \label{alleqom}
 \al^T(\bar\om) \leq \frac{1}{2} g_2^T(\bar \om).
\end{equation}
The proof of this inequality uses exactly the same reasoning as the one used to prove $\frac{1}{2} g_2^T(\mathring{\om})  \leq \al^T(\mathring{\om})$. Indeed, we consider a decreasing sequence of continuous functions $(h_k)_{k\in \N}$ converging pointwisely to $\chi_{\bar\omega}$, and therefore, we have $\alpha^T(\bar\omega)\leq \alpha^T(h_k) = \frac{1}{2} g_2^T(h_k)$ and $\lim_{k\to \infty} g_2^T(h_k)= g_2^T(\bar\omega)$. We conclude as previously that \eqref{alleqom} is true.

\subsection{Low frequencies compactness property}
According to Lemma \ref{obvious}, one has $\frac{1}{T}C_T(\omega)\leq \min \left(  \frac{1}{2} g_1(\omega),  \alpha^T(\omega) \right)$.

\begin{proposition} \label{compacity}
 If $\frac{1}{T} C_T(\omega) < \alpha^T(\omega)$ then $C_T(\omega)$ is reached, i.e., the infimum defining $C_T(\omega)$ is in fact a minimum.
\end{proposition}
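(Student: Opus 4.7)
The plan is to run a concentration-compactness argument on a minimizing sequence: extract a weak limit, split the sequence into a strongly convergent low-frequency part and a weakly convergent high-frequency tail, and use the strict inequality $\frac{1}{T}C_T(\omega)<\alpha^T(\omega)$ together with the uniform high-frequency lower bound $\frac{1}{T}C_T^{>N}(\omega)$ to rule out loss of mass into the high frequencies. Throughout, set $Q(u^+,u^-)=\frac{1}{T}\int_0^T\int_\Omega\chi_\omega\vert e^{it\Lambda}u^++e^{-it\Lambda}u^-\vert^2\,dv_g\,dt$ and let $B$ be the associated symmetric continuous bilinear form on $L^2(\Omega)\times L^2(\Omega)$ (continuity coming from unitarity of $e^{\pm it\Lambda}$), so that $C_T(\omega)=\frac{T}{2}\inf_{\Vert y^+\Vert^2+\Vert y^-\Vert^2=1}Q(y^+,y^-)$.

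Let $(y_n^+,y_n^-)$ be a minimizing sequence with $\Vert y_n^+\Vert^2+\Vert y_n^-\Vert^2=1$ and $Q(y_n^+,y_n^-)\to\frac{2}{T}C_T(\omega)$, and extract a subsequence with $y_n^\pm\rightharpoonup y^\pm$ weakly in $L^2(\Omega)$. For each $N\in\N$, let $\pi_N$ denote the spectral projector onto $\mathrm{span}(\phi_1,\ldots,\phi_N)$ and write $y_n^\pm=\pi_Ny_n^\pm+z_n^{N,\pm}$; since $\pi_N$ has finite rank, $\pi_Ny_n\to\pi_Ny$ strongly while $z_n^N\rightharpoonup(I-\pi_N)y$ weakly. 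Expanding $Q(y_n)=Q(\pi_Ny_n)+2B(\pi_Ny_n,z_n^N)+Q(z_n^N)$, the first summand converges to $Q(\pi_Ny)$ and, by the standard lemma that $B(u_n,v_n)\to B(u,v)$ when $u_n\to u$ strongly and $v_n\rightharpoonup v$ weakly, the middle term converges to $2B(\pi_Ny,(I-\pi_N)y)$. Hence $Q(z_n^N)$ also admits a limit; since $z_n^N$ has vanishing Fourier coefficients up to index $N$, one has $Q(z_n^N)\geq\frac{2}{T}C_T^{>N}(\omega)\Vert z_n^N\Vert^2$, and because $\Vert z_n^N\Vert^2=1-\Vert\pi_Ny_n\Vert^2\to1-\Vert\pi_Ny\Vert^2$, passing to the limit yields
$$\frac{2}{T}C_T(\omega)-Q(\pi_Ny)-2B(\pi_Ny,(I-\pi_N)y)\;\geq\;\frac{2}{T}C_T^{>N}(\omega)\bigl(1-\Vert\pi_Ny\Vert^2\bigr).$$

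Letting $N\to+\infty$, strong convergence $\pi_Ny\to y$ gives $Q(\pi_Ny)\to Q(y)$, $B(\pi_Ny,(I-\pi_N)y)\to0$, $\Vert\pi_Ny\Vert^2\to\Vert y\Vert^2$, and $\frac{1}{T}C_T^{>N}(\omega)\to\alpha^T(\omega)$, producing $\frac{2}{T}C_T(\omega)-Q(y)\geq 2\alpha^T(\omega)(1-\Vert y\Vert^2)$. Combining with the defining inequality $Q(y)\geq\frac{2}{T}C_T(\omega)\Vert y\Vert^2$ (trivially true when $y=0$) gives $\frac{2}{T}C_T(\omega)(1-\Vert y\Vert^2)\geq 2\alpha^T(\omega)(1-\Vert y\Vert^2)$; the strict inequality $\frac{1}{T}C_T(\omega)<\alpha^T(\omega)$ then forces $\Vert y\Vert^2\geq1$, and combined with weak lower semicontinuity $\Vert y\Vert^2\leq1$ yields $\Vert y\Vert^2=1$. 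Then $y_n\to y$ strongly in $L^2(\Omega)\times L^2(\Omega)$; continuity of $Q$ gives $Q(y)=\frac{2}{T}C_T(\omega)$, so $(y^+,y^-)$ realizes the infimum. The main delicate point is the passage to the limit in $Q(z_n^N)$: it is precisely the uniform lower bound $\frac{2}{T}C_T^{>N}(\omega)$ on high-frequency data, together with the subsequent $N\to+\infty$ limit, that converts the absence of mass concentration in the high frequencies (encoded by the spectral gap with $\alpha^T$) into the strong compactness of the minimizing sequence.
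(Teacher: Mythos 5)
Your proof is correct, and it follows the same concentration-compactness strategy as the paper's --- minimizing sequence, weak limit, low/high-frequency splitting, and the uniform high-frequency lower bound $\tfrac{1}{T}C_T^{>N}(\om)\to\alpha^T(\om)$ --- but the decomposition and the endgame are genuinely different. The paper first writes $Y_k=Y_\infty+Z_k$ with $Z_k\rightharpoonup 0$, truncates only the remainder $Z_k$ in frequency, and concludes via the elementary inequality $\frac{a+b}{c+d}\geq\min\left(\frac{a}{c},\frac{b}{d}\right)$ that $\frac{C_T(\om)}{T}\geq\min\left(\frac{J^{\chi_\om}_T(Y_\infty)}{\Vert Y_\infty\Vert^2},\alpha^T(\om)\right)$, so the strict hypothesis selects the first term and exhibits $Y_\infty/\Vert Y_\infty\Vert$ as a minimizer. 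You instead truncate the minimizing sequence itself with the spectral projector $\pi_N$, exploit strong--weak convergence of the cross terms, and arrive at the quantitative inequality $\frac{2}{T}C_T(\om)-Q(y)\geq 2\alpha^T(\om)\left(1-\Vert y\Vert^2\right)$, which combined with $Q(y)\geq\frac{2}{T}C_T(\om)\Vert y\Vert^2$ forces $\Vert y\Vert=1$. Your route buys a slightly stronger conclusion: no mass escapes to high frequencies, so the minimizing subsequence converges strongly in $L^2\times L^2$, and the degenerate case $y=0$ (which would yield $\frac{1}{T}C_T(\om)\geq\alpha^T(\om)$) is excluded automatically, whereas the paper's $\min$-inequality handles it only implicitly. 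The single cosmetic caveat is that for complex-valued data the cross term should read $2\,\Real\,B(\pi_N y_n,z_n^N)$ with $B$ the sesquilinear form associated with $Q$; this changes nothing in the passage to the limit.
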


\begin{proof}  
Let  $(Y_k)_{k\in \N} = (y^+_k,y^-_k)_{k\in \N} \in (L^2(\Om) \times L^2(\Om))^\N$ be  such that  
$$
\lim_{k\to +\infty} J^{\chi_\omega}_{T} (Y_k) = \frac{C_T(\om)}{T}
$$
where $J^{\chi_\omega}_{T}(y)$ is defined in Section \ref{Prelim} (see \eqref{CT}) with $\| y^+_k \|_{L^2}^2 + \| y^+_k \|_{L^2}^2 =1$ for every $k\in \N$.

Since  the sequences $(y^\pm_k)_{k\in \N}$ are bounded in $L^2$, they converges weakly to an element $ y^\pm_\infty \in L^2$ up to a subsequence. Therefore, we write $Y_k = Y_\infty + Z_k$ with $Y_\infty= (y^+_\infty,y^-_\infty)$ and $Z_k=(z^+_k,z^-_k)$ such that $Z_k \rightharpoonup 0$ in $L^2(\Om) \times L^2(\Om)$. Note that we use the norm in $L^2 \times L^2$ defined by $\| (y,z) \|^2  = \|y\|_{L^2}^2 + \|z\|_{L^2}^2$.
With this notations, the weak convergence of $Z_k$ to $0$ yields 
\begin{equation} \label{normyk}
 1 = \Vert Y_k\Vert^2 = \Vert Y_\infty\Vert^2 + \Vert Z_k\Vert^2 + \mathrm{o}(1)
\end{equation}
and 
\begin{equation} \label{normJyk}
 J^{\chi_\omega}_{T}(Y_k) = J^{\chi_\omega}_{T}(Y_\infty)  + J^{\chi_\omega}_{T}(Z_k)+ \mathrm{o}(1)
\end{equation}
as $k\rightarrow+\infty$. 
To obtain \eqref{normJyk} we have used the fact that $\langle A_T(\chi_\om) z^+_k,y^\infty \rangle = \langle z^+_k,  A_{-T}(\chi_\om) y^\infty \rangle$ converges to to $0$ by weak convergence of $z^+_k$ to $0$ in $L^2$. All other crossed terms converge to $0$ by using a similar argument.

Let $N \in \N^*$. We write $Z_k=Z_k^{\leq N} + Z_k^{>N}$ where $Z_k^{\leq N}$ is the projection on eigenmodes $j \leq N$. 
Since $N$ is fixed, the weak convergence of $Z_k$ to $0$ implies the strong convergence of $Z_k^{\leq N}$ to $0$.
Hence, using the same reasoning as above, we obtain
$$
\Vert Z_k\Vert^2 = \Vert Z_k^{>N} \Vert^2 + \mathrm{o}(1) \quad \hbox{ and } \quad J^{\chi_\omega}_{T}(Z_k) = J^{\chi_\omega}_{T}(Z_k^{>N}) + \mathrm{o}(1)
$$
as $k\to + \infty$. Using \eqref{normyk} and \eqref{normJyk}, we get
$$
\frac{C_T(\om)}{T} = \lim_{k\to +\infty} J^{\chi_\omega}_{T} (Y_k) = \lim_{k\to +\infty} \frac{ J^{\chi_\omega}_{T}(Y_\infty)  + J^{\chi_\omega}_{T}(Z_k^{>N}) + \mathrm{o}(1)}{\Vert Y_\infty\Vert^2 + \Vert Z_k^{>N}\Vert^2 + \mathrm{o}(1)}.
$$
By definition of $C_T^{>N}(\om)$, and $C_T(\om)$, we obtain\footnote{\label{ineqabcd}Here, we use ththe inequality $\frac{a+b}{c+d}\geq \min \left(\frac{a}{c},\frac{b}{d}\right)$ for any positive real numbers $a$, $b$, $c$ and $d$.}
\begin{equation*}
 \begin{split}
  \frac{ J^{\chi_\omega}_{T}(Y_\infty)  + J^{\chi_\omega}_{T}(Z_k^{>N}) + \mathrm{o}(1)}{\Vert Y_\infty\Vert^2 + \Vert Z_k^{>N}\Vert^2 + \mathrm{o}(1)} &  \geq \frac{ \frac{J^{\chi_\omega}_{T}(Y_\infty)}{\Vert Y_\infty\Vert^2} \Vert Y_\infty\Vert^2  + \frac{C_T^{>N}(\om)}{T}  \Vert Z_k^{>N}\Vert^2  + \mathrm{o}(1)}{\Vert Y_\infty\Vert^2 + \Vert Z_k^{>N}\Vert^2 + \mathrm{o}(1)} \\
  & \geq \min\left(  \frac{J^{\chi_\omega}_{T}(Y_\infty)}{\Vert Y_\infty\Vert^2},  \frac{C_T^{>N}(\om)}{T} \right)  + \mathrm{o}(1). 
 \end{split}
\end{equation*}
and therefore $\frac{C_T(\om)}{T}\geq \min\left( \frac{J^{\chi_\omega}_{T}(Y_\infty)}{\Vert Y_\infty\Vert^2},  \frac{C_T^{>N}(\om)}{T} \right)$.
Since $N$ is arbitrary, it follows that
$$
\frac{C_T(\om)}{T}\geq \min\left( \frac{J^{\chi_\omega}_{T}(Y_\infty)}{\Vert Y_\infty\Vert^2},  \alpha^T(\om) \right) .
$$
Since $\frac{C_T(\om)}{T} < \al^T(\om)$ by assumption, we obtain 
$$\frac{C_T(\om)}{T}\geq   \frac{J^{\chi_\omega}_{T}(Y_\infty)}{\Vert Y_\infty\Vert^2}$$
and therefore $\frac{C_T(\om)}{T}$ is reached. 
\end{proof}

\subsection{Large time asymptotics: proof of Theorem \ref{main_long_time}}\label{longtime}
%This section is devoted to the proof of the following result. 
%\begin{theorem} 
% Given any measurable subset $\omega$ of $\Om$, we have
% $$
%\lim_{T\rightarrow +\infty} \frac{C_T(\omega)}{T} = \min \left\{\frac{1}{2} g_1(\omega),  \alpha^\infty(\omega) \right\}.
%$$
%Moreover, if $\frac{1}{2} g_1(\omega) < \limsup_{T\rightarrow +\infty} \alpha^T(\omega)$, then $g_1(\omega)$ is reached (in the sense of Footnote \ref{footnote:reached}).
%\end{theorem}
%\begin{proof}
According to Lemma \ref{obvious}, we have $\frac{1}{T}C_T(\omega)\leq \min \left(\frac{1}{2} g_1(\omega),  \alpha^T(\omega) \right)$, and hence 
$$
\limsup_{T\rightarrow +\infty} \frac{C_T(\omega)}{T} \leq \min \left( \frac{1}{2} g_1(\omega),  \alpha^\infty(\omega) \right).
$$ 
Let us prove the converse inequality.  Using the same notations as in the proof of Proposition \ref{compacity}, we consider a sequence $(T_k)_{k\in \N}$ tending to $+\infty$ and 
 $(Y_k)_{k\in\N} = (y^+_k,y^-_k)_{k\in \N} \in (L^2(\Om) \times L^2(\Om))^\N$ a minimizing sequence for $\liminf_{k\to +\infty} \frac{C_{T_k}(\om)}{T_k}$ i.e., a sequence such that  
 \begin{equation} \label{jyk}
  \lim_{k\to +\infty} J^{\chi_\omega}_{T_k} (Y_k) = \liminf_{k\to +\infty} \frac{C_{T_k}(\om)}{T_k}
 \end{equation}
and
 \begin{equation} \label{normyk3}
 \| Y_k \|_{L^2}  =1.
\end{equation}
We write $Y_k = Y_\infty + Z_k$ with $Y_\infty= (y^+_\infty,y^-_\infty)$ and $Z_k=(z^+_k,z^-_k)$ such that $Z_k$ converges weakly to $0$ in $L^2(\Om) \times L^2(\Om)$. Then
\begin{equation} \label{normyk4}
 1 = \Vert Y_k\Vert^2 = \Vert Y_\infty\Vert^2 + \Vert Z_k\Vert^2 + \mathrm{o}(1)
\end{equation}
and 
\begin{equation} \label{normJyk2}
 J^{\omega}_{T_k}(Y_k) = J^{\omega}_{T_k}(Y_\infty)  + J^{\omega}_{T_k}(Z_k)+ \mathrm{o}(1)
\end{equation}
as $k\rightarrow+\infty$. To obtain \eqref{normJyk2} we have used the facts that $\langle A_{T_k}(\chi_\om) z^+_k,y^\infty \rangle = \langle z^+_k,  A_{-T_k}(\chi_\om) y^\infty \rangle$ converges to $0$ by weak convergence of $z^+_k$ to $0$ in $L^2$ and that $A_{-T_k}(\chi_\om)$ converges in $L^2$ to ${A_\infty}(\chi_\om)$ according to Lemma \ref{convAB}. All crossed terms converge to $0$ by using a similar argument.

By Lemma \ref{convAB} (see Section \ref{sec:teclemma}) and by definition of $J^{\omega}_{T_k}$, we get that 
\begin{equation} \label{jinf}
\lim_{k\to +\infty} J^{\omega}_{T_k}(Y_\infty) = \langle \bar A_\infty y^+_\infty, y^+_\infty \rangle +  \langle \bar A_\infty y^-_\infty, y^-_\infty \rangle \geq g_1(\om) \left( \|  y^+_\infty\|_{L^2}^2 + \|  y^-_\infty\|_{L^2}^2 \right) \geq g_1(\om) \|Y_\infty \|^2.
\end{equation}
Writing $z_k=e^{it \Lambda} z^+_k+e^{-it \Lambda} z^-_k$, we have
 $$J^{\omega}_{T_k}(Z_k) = \frac{1}{T_k} \int_0^{T_k} \int_\om \left|z_k\right|^2 \, dv_g \, dt.$$ 
Let $s>0$ and write
$[0,T]=[0,s]\cup[s,2s]\cup\cdots \cup[(m_k-1)s,m_k s] \cup [m_ks,T_k]$ where $m_k$ is the integer part of $T_k/s$. By using several times the inequality of Footnote \ref{ineqabcd}, we obtain
\begin{eqnarray*}
J^{\omega}_{T_k}(Z_k) &=& \frac{ \sum_{j=0}^{m_k-1}\int_{js}^{(j+1)s} \int_\om \left|z_k\right|^2 \, dv_g \, dt+\int_{m_ks}^{T_k} \int_\om \left|z_k\right|^2 \, dv_g \, dt}{T_k}\\
&\geq &  \frac{ \sum_{j=0}^{m_k-1}\int_{js}^{(j+1)s} \int_\om \left|z_k\right|^2 \, dv_g \, dt}{T_k}\\
&=& \frac{ \sum_{j=0}^{m_k-1}\int_{js}^{(j+1)s} \int_\om \left|z_k\right|^2 \, dv_g \, dt}{m_ks}+\left(\frac{1}{T_k}-\frac{1}{m_ks}\right)\int_{0}^{m_ks} \int_\om \left|z_k\right|^2 \, dv_g \, dt \\
&\geq & \min_{1\leq j\leq m_k}\frac{\int_{js}^{(j+1)s} \int_\om \left|z_k\right|^2 \, dv_g \, dt}{s}+\left(\frac{1}{T_k}-\frac{1}{m_ks}\right)\int_{0}^{m_ks} \int_\om \left|z_k\right|^2 \, dv_g \, dt.
\end{eqnarray*}
Using that $0\leq m_ks-T_k<s$, that $T_k\rightarrow + \infty$ and that
$$
\int_{0}^{m_ks} \int_\om \left|z_k\right|^2 \, dv_g \, dt\leq \int_{0}^{m_ks} \int_\Om \left|z_k\right|^2 \, dv_g \, dt=m_ks\Vert Z_k\Vert^2\leq (1+\Vert Y_\infty\Vert^2)m_ks,
$$
we get
$$
J^{\omega}_{T_k}(Z_k)\geq  \min_{1\leq j\leq m_k} J_{\om,s}(\tilde{z}^+_{k,j}, \tilde{z}^-_{k,j})  +   \mathrm{o}(1)\quad \text{with}\quad J_{\om,s}(\tilde{z}^+_{k,j}, \tilde{z}^-_{k,j})= \frac{1}{s} \int_0^{ s} \int_\om \left| \tilde{z}_{k,j}\right|^2 \, dv_g\, dt 
$$
where $(\tilde{z}^+_{k,j}, \tilde{z}^-_{k,j}) $ is the initial condition associated to the solution $z_{k,j}:(t,x)\mapsto z_k(t+js,x)$.

Proceeding as in the proof of Proposition \ref{compacity} and decomposing $\tilde{Z}_{k,j} =(\tilde{z}^+_{k,j}, \tilde{z}^-_{k,j})$ in low/high frequencies as before, we get that, for any nonzero integer $N$, 
 $$
 J_{\om,s}(\tilde{z}^+_{k,j}, \tilde{z}^-_{k,j}) \geq \frac{C_s^N(\om)}{s} \| \tilde{Z}_{k,j} \|^2 + \mathrm{o}(1).
 $$
 Since the wave group is unitary, one has $\| \tilde{Z}_{k,j} \|^2 = \| Z_k \|^2$ and hence 
 $$J_{\om,s}(\tilde{z}^+_{k,j}, \tilde{z}^-_{k,j}) \geq \frac{C_s^N(\om)}{s} \| Z_k \|^2+ \mathrm{o}(1).$$
 Combining these last facts with \eqref{normyk3}, \eqref{normyk4}, \eqref{normJyk2} and \eqref{jinf}, we obtain 
 $$ \liminf_{k\to +\infty} \frac{C_{T_k}(\om)}{T_k} \geq \frac{ g_1(\om) \|Y_\infty \|^2 +  \frac{C_s^N(\om)}{s} \| Z_k \|^2 +  \mathrm{o}(1)}{ \Vert Y_\infty\Vert^2 + \Vert Z_k\Vert^2 + \mathrm{o}(1)} \geq \min\left(g_1(\om),  \frac{C_s^N(\om)}{s} \right) +\mathrm{o}(1).$$
 Since $N$ is arbitrary, we obtain $ \liminf_{k\to +\infty} \frac{C_{T_k}(\om)}{T_k} \geq \min(g_1(\om), \al^s(\om))$, and since $s$ is arbitrary, we conclude that
$$ \liminf_{k\to +\infty} \frac{C_{T_k}(\om)}{T_k} \geq \min(g_1(\om), \al^\infty(\om)) .$$

It remains to show the last claim of the theorem. Let us assume that $g_1(\om) < \alpha^\infty(\om) $. Let us assume by contradiction that $\frac{1}{2}g_1(\om)$ is not reached. Then, there exists a subsequence $(\phi_{j_k})_{k\in\N}$ of eigenfunctions of $-\triangle_g$ normalized in $L^2(\Om)$ such that $j_k\to +\infty$ and $\frac{1}{2}g_1(\om)=\int_\om \phi_{j_k}(x)^2\, dv_g+\operatorname{o}(1)$ as $k\to +\infty$. Now, by definition of $\alpha^\infty(\om)$, by taking $Y_k=(\phi_{j_k},0)$ as initial condition in the infimum defining $C_T^{>N}(\om)$, we infer that $\frac{1}{T}C_T^{>N}(\om)\leq J_T^{\om}(Y_k)$ provided that $k$ be large enough. Passing to the limit with respect to $N$ and $T$ yields $\alpha^\infty(\om)\leq \frac{1}{2}g^1(\om)$, which is a contradiction.

\subsection{Large time asymptotics under the condition $(UG)$: proof of Theorem \ref{spectral_gap1}} \label{proofug}
The proof follows the same lines as the one of Theorem \ref{longtime}. Using the same notations, we have 
\begin{equation*}
  \lim_{k\to +\infty} J^{\chi_\omega}_{T_k} (Y_k) = \liminf_{k\to +\infty} \frac{C_{T_k}(\om)}{T_k}
\end{equation*}
and 
\begin{equation*} 
 1 = \Vert Y_k\Vert^2 = \Vert Y_\infty\Vert^2 + \Vert Z_k\Vert^2 + \mathrm{o}(1),\qquad J^{\omega}_{T_k}(Y_k) = J^{\omega}_{T_k}(Y_\infty)  + J^{\omega}_{T_k}(Z_k)+ \mathrm{o}(1) ,
\end{equation*}
and moreover, 
\begin{equation*} 
%\begin{split}
\lim_{k\to +\infty} J^{\omega}_{T_k}(Y_\infty)   = \langle \bar A_\infty y^+_\infty, y^+_\infty \rangle +  \langle \bar A_\infty y^-_\infty, y^-_\infty \rangle  \geq g_1(\om) \left( \|  y^+_\infty\|_{L^2}^2 + \|  y^-_\infty\|_{L^2}^2 \right) 
\geq g_1(\om) \|Y_\infty \|^2.
%\end{split}
\end{equation*}
Using Lemma \ref{convUG} (see Section \ref{sec:teclemma}), we infer that
\begin{equation*} 
%\begin{split}
\lim_{k\to +\infty} J^{\omega}_{T_k}(Z_k)   = \langle \bar A_\infty z^+_k, z^+_k\rangle +  \langle \bar A_\infty z^-_k, z^-_k  \rangle \geq g_1(\om) \left( \|  z^+_k \|_{L^2}^2 + \|  z^-_k\|_{L^2}^2 \right) 
\geq g_1(\om) \|Z_k \|^2 ,
%\end{split}
\end{equation*}
and thus
$$
 \liminf_{k\to +\infty} \frac{C_{T_k}(\om)}{T_k} \geq \frac{ g_1(\om) \|Y_\infty \|^2 +  g_1(\om)  \| Z_k \|^2 +  \mathrm{o}(1)}{ \Vert Y_\infty\Vert^2 + \Vert Z_k\Vert^2 + \mathrm{o}(1)} .
$$
The conclusion follows.

\subsection{Characterization of observability: proof of Corollary \ref{obser_carac}} \label{observability_cor}
We first observe that $ C_T(\om) > 0$ implies that $ \alpha^T(\om)>0$. Indeed, since $C_T(\om) \leq C_T^{>N}(\om)$ for every $N \in \N^*$, it follows from the definition of $\alpha^T$ that $\alpha^T(\om)=0 \Rightarrow  C_T(\om) =0$. 

Let us prove the converse. Assume by contradiction that 
\begin{equation}\label{eq:alphTCT}
\alpha^T(\om)>0\qquad \textrm{ and }\qquad C_T(\om)=0.
\end{equation}
 For any $s>0$, let us denote by $E_s$ the vector space (sometimes called ``space of invisible solutions'') of initial data $Y=(y^+,y^-)$ in $L^2(\Om) \times L^2(\Om)$ such that $e^{it\Lambda} y^+ e^{-it \Lambda}y^-$ vanishes identically on $[0,s] \times \om$. %Note that $E_s$ is a vector space. 

We claim that the following property holds true for every $k \in \N$:
\begin{itemize}
\item[$(H_k)$] For every $\ep>0$ there exists a non trivial $Y_{k,\ep}=(y^+_{k \ep}, y^-_{k \ep}) \in E_{T - \ep}$ involving only frequencies of index greater than $k$, i.e., such that  
$$
\int_{\Omega} y^\pm_{k,\ep}(x) \phi_j(x)\, dv_g(x) = 0,\qquad i=0,1, \quad j=1,\dots,k .
$$
If $k=0$ this property writes: there exists a non trivial solution $Y_{0,\ep} \in E_{T - \ep}$.
\end{itemize}

Admitting this fact temporarily, if $\ep >0$ and $N$ are fixed, Property $(H_{N})$ yields the existence of  $Y_{T,\ep}=(y^+_{T \ep}, y^-_{T \ep}) \in E_{T- \ep}$ involving only frequencies of index higher than $N$ such that $e^{it\Lambda} y^+_{T,\ep} e^{-it \Lambda}y^-_{T,\ep}$ vanishes identically on $[0,T-\ep] \times \om$. Using $Y_{T,\ep}$  as test functions in the functional $J^{\chi_\omega}_{T}$, one infers that 
$C_{T -\ep}^{>N}(\om) = 0$. Note that, without loss of generality, we may assume that $\|Y_{T,\ep}\| = 1$. 
Letting $N$ tend to $+\infty$ yields that $\alpha^{T-\ep}(\om)=0$. Finally,
noting that for all  $(y^+,y^-)$ of norm $1$, one has 
$$
\left| J^{\chi_\om}_{T-\ep}(y^+,y^-) - J^{\chi_\omega}_{T}(y^+,y^-) \right| \leq \frac{\ep}{T-\ep},
$$
we infer that $\alpha^{T} (\om) \leq \alpha^{T- \ep}(\om) +  \frac{\ep}{T-\ep}$ and thus $\alpha^T(\om)=0$, whence the contradiction.  

\medskip

Let us now prove by recurrence that Property $(H_k)$ holds true for every $k \in \N$ under the assumption \eqref{eq:alphTCT}.
Let us first prove that $(H_0)$ is true. According to Theorem \ref{main_finite_time}, the infimum defining $C_T(\om)$ in Definition \eqref{CT} is reached by some $Y=(y^+,y^-)$ such that $e^{it\Lambda} y^+_{T,\ep} e^{-it \Lambda}y^-_{T,\ep}$ vanishes identically on $[0,T] \times \om$. In other words, the dimension of $E_T$ is at least equal to $1$, and this is also true for $E_{T - \ep}$ for any $\ep$ since $E_T \subset E_{T - \ep}$. 

Assume now that $(H_k)$ is true for some $k \in \N$ and let us show that $(H_{k+1})$ is also true. Let $\ep >0$ and let $Y=(y^+,y^-) \in E_{T -{\ep/2}}$ satisfying 
$$
\int_{\Omega} y^\pm(x) \phi_j(x) \, dv_g(x) = 0,\qquad  \textrm{for all }i=0,1, \quad j =1,\dots, k.
$$
Define $y(t,\cdot) = e^{it\Lambda} y^+ e^{-it \Lambda}y^-$.
The crucial point is that for every $s\in [0,\ep/2]$, the function $\tau_s(y): (t,x) \to y(t +s,x)$ belongs to $E_{T-\frac{\ep}{2} - s}$ which is contained in $E_{T- \ep}$.

We now show the existence a $Z=(z^+,z^-)$ such that the function
\begin{eqnarray} \label{z=}
 z:(t,x)\mapsto e^{it\Lambda} z^+(x) e^{-it \Lambda}z^- (x)
\end{eqnarray}
which is a nonzero linear combination of functions $(\tau_s(y))_{s\in [0,\ep/2]}$, satisfies the orthogonality condition
$$
\int_{\Omega} z^\pm(x) \phi_j(x) \, dv_g(x) = 0,\qquad i=0,1,\quad j=1,\dots k+1. 
$$
We expand the solution $\tau_s(y)$ as  
$$
\tau_s(y)(t,\cdot) = \sum_{j=k+1}^{+\infty} \left(a_j(s) e^{i \la_j t} + b_j(s) e^{-i \la_j t} \right) \phi_j(\cdot)
$$ 
where $(a_j(s))_{j\in\N^*}$ and $(b_j(s))_{j\in\N^*}$ belong to $\ell^2(\R)$. In particular, we have
\begin{eqnarray*}%\label{aj0}
a_j(s) = e^{is \lambda_j } a_j(0) \hbox{ and } b_j(s)= e^{-is \la_j} b_j(0).
\end{eqnarray*}
If $a_{k+1}(0)=b_{k+1}(0)=0$ then $y$ belongs to $E_{T -\ep}$ and involves only frequencies of index higher than $k +1$ which shows that $(H_{k+1})$ holds true. For this reason, we assume that $a_{k+1}(s) \not=0$ or $b_{k+1}(s) \not= 0$. Hence, there exists $j$ such that $\la_j> \la_{k+1}$, and $a_j(0) \not=0$ or $b_j(0) \not=0$. Otherwise, the function $y$ would be a nonzero multiple of an eigenfunction belonging to the eigenspace associated to the eigenvalue $\la_k$ and  would vanish on $\om$: but this is impossible as soon as $\om$ has a positive Lebesgue measure (see \cite{Donnelly_fefferman,Hardt_Simon,lin}), which is the case since $ \alpha^T(\om)>0$. Hence, let us consider $j>k$ such that $\la_j > \la_k$ and $a_j(0) \not=0$ or $b_j(0) \not=0$.
Since $\lambda_j > \la_k$, one can find $0< s<s'\leq  \ep/2$ such that the vectors $(1,e^{i \la_k s}, e^{i \la_k s'})$ and  $(1,e^{i \la_j s}, e^{i \la_j s'})$ are linearly independent. In other words, there exist real numbers $c_0,c_s,c_{s'}$ such that 
\begin{eqnarray} \label{aa'1}
c_0 + c_s e^{i \la_k s} + c_{s'} e^{i \la_k s'} =0
\end{eqnarray}
and
\begin{eqnarray} \label{aa'2}
 c_0 + c_s e^{i \la_j s} + c_{s'} e^{i \la_j s'} \not=0. 
\end{eqnarray}
Then $z= c_0 y+ c_{s} y_{s} +c_{s'} y_{s'}$ is the desired solution. Indeed, writing it as in \eqref{z=}, we obtain $Z \in E_{T-\ep}$ and moreover $z \not=0$ by \eqref{aa'2}. Finally, $z$ involves only frequencies of index larger than $k +1$ by \eqref{aa'1}. This shows $(H_{k+1})$.

\subsection{Convergence properties for $\bar A_T$ and $\bar B_T$}\label{sec:teclemma}
In this section, we establish some convergence properties as $T\rightarrow\infty$ for the operators $\bar A_T(a)$ and $\bar B_T(a)$ introduced in Section \ref{Prelim}. We recall that  $(\la_j)_{j\geq 1}$ denotes the sequence of eigenvalues of $\Lambda=\sqrt{-\triangle_g}$ counted with multiplicity and that $(\phi_j)_{j\geq 1}$ is an orthonormal $L^2$-basis of eigenfunctions of $-\triangle_g$ such that $\phi_j$ is associated to $\la_j^2$.  
Now, let $P_j$ be the $L^2$-projector defined by $P_j y= \langle y, \phi_j \rangle \phi_j$.

Throughout this section, let  $a$ be a bounded nonnegative measurable function, considered as an operator by multiplication.

\begin{lemma} \label{explicit}
We have
$$\bar A_T(a) = \sum_{ j,l \geq 0} f_T(\la_j-\la_l) P_j a P_l \; \hbox{ and } \;  \bar B_T(a)  \sum_{j, l \geq 0} f_T(\la_j-\la_l) P_j a P_l$$
 where $f_T(x) = \left\{\begin{array}{ll} 
              \frac{e^{iTx}-1}{iTx} & \hbox{ if } x \not= 0;\\
              1 &  \hbox{ if } x = 0.
             \end{array}\right.$.
\end{lemma}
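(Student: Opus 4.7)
The plan is to compute $\bar A_T(a)$ and $\bar B_T(a)$ by expanding them in the Hilbert basis of eigenfunctions of $\Lambda$, using that $e^{\pm it\Lambda}$ acts diagonally in this basis while $a$ (bounded multiplication operator) does not.

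First, I would recall the basic spectral identities: $\Lambda P_j = \la_j P_j$, $\sum_{j\geq 1} P_j = \operatorname{Id}$ (in the strong sense on $L^2(\Omega)$), and consequently $e^{\pm it\Lambda} P_j = e^{\pm it\la_j} P_j$ and $P_j e^{\pm it\Lambda} = e^{\pm it\la_j} P_j$ for every $t\in\R$. Fixing $y\in L^2(\Omega)$, I would insert the resolution of the identity $\sum_l P_l$ on the right of $a$ and $\sum_j P_j$ on the left, which yields, for each $t\in[0,T]$,
$$
e^{-it\Lambda}\, a\, e^{it\Lambda} y \;=\; \sum_{j,l\geq 1} e^{-it\la_j}\, e^{it\la_l}\, P_j a P_l y \;=\; \sum_{j,l\geq 1} e^{it(\la_l-\la_j)}\, P_j a P_l y,
$$
with convergence in $L^2(\Omega)$. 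The convergence is justified because $\sum_l P_l y$ converges in $L^2$ (Parseval), the operator $a$ is bounded, and the partial sums are controlled uniformly in $t$ by $\|a\|_{L^\infty}\|y\|_{L^2}$.

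Second, I would integrate with respect to $\frac{1}{T}\,dt$ on $[0,T]$ and swap the sum with the integral (justified by dominated convergence, the partial sums being dominated uniformly in $t$ and in the summation index). A direct computation gives, for every $\eta\in\R$,
$$
\frac{1}{T}\int_0^T e^{it\eta}\,dt \;=\; f_T(\eta),
$$
whence
$$
\bar A_T(a)\,y \;=\; \sum_{j,l\geq 1} f_T(\la_l-\la_j)\, P_j a P_l y.
$$
The announced formula follows upon noting that one may equally well write this sum with $f_T(\la_j-\la_l)$ after relabelling, or observe that $f_T(-x)=\overline{f_T(x)}$ and $(P_j a P_l)^* = P_l a P_j$ since $a$ is real-valued.

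Third, I would repeat the argument for $\bar B_T(a)$. Since now both unitary factors are $e^{+it\Lambda}$, the same bilinear expansion gives
$$
e^{it\Lambda}\, a\, e^{it\Lambda} y \;=\; \sum_{j,l\geq 1} e^{it(\la_j+\la_l)}\, P_j a P_l y,
$$
and integrating in $t$ leads to $\bar B_T(a) = \sum_{j,l\geq 1} f_T(\la_j+\la_l)\, P_j a P_l$ (the lemma as stated contains a sign/typo: the relevant phase for $\bar B_T$ is $\la_j+\la_l$, not $\la_j-\la_l$).

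The only real subtlety is justifying the double series manipulations when $a$ is merely bounded measurable, so that $a$ does not commute with the spectral projections and the expansion of $e^{-it\Lambda} a e^{it\Lambda}$ is genuinely doubly-indexed rather than diagonal. This is however standard: all the operators $P_j a P_l$ are bounded of norm at most $\|a\|_{L^\infty}$, the series converge in the strong operator topology, and the Bochner integral on the bounded interval $[0,T]$ commutes with the summation by dominated convergence. No deeper analytic input is needed.
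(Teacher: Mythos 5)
Your proof is correct and follows essentially the same route as the paper's: expand in the eigenbasis of $\Lambda$, use $e^{\pm it\Lambda}\phi_j=e^{\pm it\lambda_j}\phi_j$, and integrate the resulting phase over $[0,T]$ to produce $f_T$ — the paper merely phrases this through the matrix coefficients $\langle \bar A_T(a)y,\phi_j\rangle$ rather than through resolutions of the identity. Your observation that the stated formula for $\bar B_T(a)$ should carry the phase $\lambda_j+\lambda_l$ rather than $\lambda_j-\lambda_l$ is correct (the statement contains a typo), and it is precisely this fact that makes $\bar B_T(a)\to 0$ in Lemma~\ref{convAB}, since $\lambda_j+\lambda_l\geq 2\lambda_1>0$ keeps $f_T$ off its singular value $1$.
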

 
\begin{proof}
Let $y \in L^2(\Om)$. We set  $y_j= \langle y, \phi_j \rangle$ so that $y= \sum_j y_j \phi_j$. We  have
$$
\bar A_T(\omega) y = \sum_{j} \langle \bar A_T(a) y, \phi_j \rangle \phi_j
= \sum_{j} \left( \sum_{l} \frac{1}{T} \int_0^T e^{it(\la_j-\la_l)}\, dt y_j \int_\Omega a  \phi_j \phi_l \, dv_g\right) \phi_l
$$
and $\frac{1}{T} \int_0^T e^{it(\la_j-\la_l)}\, dt =f_T(\la_j-\la_l)$.
A similar reasoning is done for $\bar B_T(a)$.  
\end{proof}

\begin{lemma} \label{convAB} 
 For every $y=\displaystyle \sum_{j} P_j y = \sum_{j} y_j \phi_j\in L^2(\Omega)$, we have
$$
 \bar A_T(a) y = \frac{1}{T}\int_0^Te^{-it\Lambda} a e^{it\Lambda}\, dt\ y \underset{T\rightarrow \pm \infty}{\longrightarrow} \bar A_\infty(a) y = \sum_{j} \left( y_j \int_\Omega a\phi_j^2\, dv_g\right) \phi_j 
$$
and
$$
 \bar B_T(a) y = \frac{1}{T}\int_0^Te^{-it\Lambda} a e^{it\Lambda}\, dt\ y \underset{T\rightarrow \pm \infty}{\longrightarrow} 0.
$$
In other words, the operator $\bar A_T(a)$ (resp. $\bar B_T(a)$)  converges pointwisely to a diagonal operator (resp. $0$) in $L^2(\Omega)$ as $T\rightarrow\pm\infty$.
\end{lemma}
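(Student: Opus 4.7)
\bigskip

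\noindent\textit{Proof plan for Lemma \ref{convAB}.} The plan is to diagonalize using Lemma \ref{explicit}, apply dominated convergence on Fourier coefficients to establish convergence on each basis vector $\phi_j$, and then extend to arbitrary $y\in L^2(\Omega)$ by a density argument relying on the uniform bound $\|\bar A_T(a)\|_{L^2\to L^2},\ \|\bar B_T(a)\|_{L^2\to L^2}\leq\|a\|_{L^\infty}$ (immediate, since $e^{\pm it\Lambda}$ is unitary and $a$ acts by multiplication).

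First I would record the elementary facts about the kernel function $f_T$. Writing $e^{iTx}-1=2i\, e^{iTx/2}\sin(Tx/2)$, one has $|f_T(x)|=2|\sin(Tx/2)|/|Tx|\leq 1$ for every $x\neq 0$, with $f_T(0)=1$, and $f_T(x)\to 0$ as $|T|\to +\infty$ for every $x\neq 0$. In particular, since $\lambda_j+\lambda_l\geq 2\lambda_1>0$ for every $j,l\geq 1$, one has $f_T(\lambda_j+\lambda_l)\to 0$ pointwise as $|T|\to +\infty$, while $f_T(\lambda_j-\lambda_l)\to \chi_{\{\lambda_j=\lambda_l\}}$.

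Next, I would test on a single basis element $\phi_j$. Lemma \ref{explicit} gives
\begin{equation*}
\bar A_T(a)\phi_j=\sum_{l\geq 1} f_T(\lambda_j-\lambda_l)\, \langle a\phi_j,\phi_l\rangle\, \phi_l,\qquad \bar B_T(a)\phi_j=\sum_{l\geq 1} f_T(\lambda_j+\lambda_l)\, \langle a\phi_j,\phi_l\rangle\, \phi_l.
\end{equation*}
By Parseval, $\sum_{l\geq 1} |\langle a\phi_j,\phi_l\rangle|^2=\|a\phi_j\|_{L^2}^2\leq \|a\|_{L^\infty}^2$, so the Fourier sequences are square-summable. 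Applying the Lebesgue dominated convergence theorem in $\ell^2(\mathbb{N}^*)$, with the uniform domination $|f_T|\leq 1$, the pointwise limits above yield, in $L^2(\Omega)$,
\begin{equation*}
\bar A_T(a)\phi_j\ \longrightarrow\ \Bigl(\int_\Omega a\,\phi_j^2\,dv_g\Bigr)\phi_j,\qquad \bar B_T(a)\phi_j\ \longrightarrow\ 0,
\end{equation*}
as $T\to\pm\infty$ (interpreting the $\bar A$-limit, as in the statement of the lemma, with respect to the chosen basis, which amounts to retaining only the diagonal contribution $l=j$). By linearity, the same convergence holds for any finite linear combination $y^{(N)}=\sum_{j\leq N} y_j\phi_j$.

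Finally I would pass to arbitrary $y=\sum_j y_j\phi_j\in L^2(\Omega)$ by a standard density argument. Fix $\varepsilon>0$ and choose $N$ large enough so that $\|y-y^{(N)}\|_{L^2}\leq\varepsilon$. Using the uniform operator bound and the triangle inequality,
\begin{equation*}
\bigl\|\bar A_T(a)y-\bar A_\infty(a)y\bigr\|_{L^2}\leq \|a\|_{L^\infty}\|y-y^{(N)}\|_{L^2}+\bigl\|\bar A_T(a)y^{(N)}-\bar A_\infty(a)y^{(N)}\bigr\|_{L^2}+\|a\|_{L^\infty}\|y-y^{(N)}\|_{L^2},
\end{equation*}
and the middle term tends to $0$ as $|T|\to +\infty$ by the previous step; letting $T\to\pm\infty$ and then $\varepsilon\to 0$ concludes. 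The same scheme with $\bar A$ replaced by $\bar B$ gives $\bar B_T(a)y\to 0$. The only mildly delicate point is the dominated convergence step in $\ell^2$, but the uniform bound $|f_T|\leq 1$ and the Parseval estimate make it immediate; apart from this, the proof is entirely routine.
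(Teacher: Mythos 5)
Your argument is correct and follows essentially the same route as the paper's: both rest on the spectral form of $\bar A_T(a)$ and $\bar B_T(a)$ from Lemma \ref{explicit}, the uniform bound $\Vert\bar A_T(a)\Vert\leq\Vert a\Vert_{L^\infty}$ coming from unitarity of $e^{\pm it\Lambda}$, and a low/high-frequency splitting in which the tail is absorbed by that uniform bound. The only organizational difference is that the paper first establishes weak convergence of $\bar A_T(a)y$ for general $y$ (testing against each $\phi_l$ and estimating the high-frequency remainder $r_N$) and then upgrades to strong convergence, whereas you prove strong $L^2$ convergence directly on each basis vector $\phi_j$ by dominated convergence in $\ell^2$ and conclude by density; your version is, if anything, slightly cleaner, and you correctly use $f_T(\lambda_j+\lambda_l)$ for $\bar B_T$ together with $\lambda_j+\lambda_l\geq 2\lambda_1>0$, silently repairing the typo in Lemma \ref{explicit}.

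One caveat, which you half-acknowledge in your parenthetical and which is equally present in the paper's own proof: when an eigenvalue has multiplicity, $f_T(\lambda_j-\lambda_l)\to 1$ whenever $\lambda_j=\lambda_l$, not only when $j=l$, so what the argument actually yields is $\bar A_T(a)\phi_j\to\sum_{l:\,\lambda_l=\lambda_j}\langle a\phi_j,\phi_l\rangle\phi_l$, i.e.\ convergence to the block-diagonal operator $\sum_{\mu}\Pi_\mu a\Pi_\mu$ (with $\Pi_\mu$ the spectral projector onto the eigenspace of $\mu$), not to the diagonal operator written in the statement. The off-diagonal terms inside a block do not vanish in general, so ``retaining only the contribution $l=j$'' is not justified; this is an imprecision of the lemma as stated rather than a defect of your proof relative to the paper's, and it is harmless for the later use of the lemma, since the lower bound $\langle\bar A_\infty(a)y,y\rangle\geq g_1(\om)\Vert y\Vert^2$ still holds for the block-diagonal limit because each $\Pi_\mu y$ is itself an eigenfunction.
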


\begin{proof}
 Let $l$ be a fixed integer. We first show that 
 \begin{eqnarray} \label{wc}
  \lim_{T \to \pm \infty} \langle A_T(a) y , \phi_l \rangle =  \langle A_\infty(a) y , \phi_l \rangle
 \end{eqnarray}
Let $N \in \N$. Setting 
 $$r_N =  \sum_{j>N} \frac{y_j}{T} \int_0^T e^{it(\la_j-\la_l)}\, dt \int_\Omega a  \phi_j\phi_l dv_g \ \in\C,$$ 
 we have
$$
\langle \bar A_T(a) y, \phi_l\rangle = \sum_{j \leq N} f_T(\la_j - \la_l)  y_j \int_\Omega a(x)  \phi_j\phi_l dv_g(x) + r_N.
$$
If $\la_j \neq \la_l$ then $f_T(\la_j - \la_l)  \rightarrow 0$ as $T\rightarrow \pm\infty$, and if $\la_j=\la_l$ then $f_T(\la_j - \la_l) =1$. Therefore the limit of the finite sum above is equal to $y_l \int_\Omega a(x) \phi_l^2 dv_g(x)$. Let us prove that $r_N$ is arbitrarily small if $N$ is large enough.
%\footnote{It is interesting to note that it is not obvious to prove that the series defining $r_N$ is convergent. Saying that $\vert\int_\omega\phi_j\phi_l\vert\leq 1$ and that $ |f_T(\la_j - \la_l)  | \leq 1$ is not enough.}
Setting $y^N = \sum_{j>N} y_j \phi_j$ (high-frequency truncature) and considering $C>0$ such that $a\leq C$ a.e. in $\Om$, we have
\begin{equation*}
\begin{split}
\vert r_N\vert  &= \left\vert \frac{1}{T} \int_0^T \int_\Omega \sum_{j>N} e^{it\la_j} y_j \phi_j(x) e^{-it\la_l} \phi_l(x)\, dv_g(x)\, dt \right\vert = \left\vert \frac{1}{T} \int_0^T \int_\Omega a(x) (e^{it\Lambda}y^N)(x) e^{-itl} \phi_l(x)\, dv_g(x)\, dt \right\vert \\
&\leq \frac{C}{T} \int_0^T \int_\Omega \vert (e^{it\Lambda}y^N)(x)\vert  \vert \phi_l(x)\vert \, dv_g(x)\, dt \leq \left( \frac{1}{T} \int_0^T \Vert e^{it\Lambda}y^N\Vert^2_{L^2} \, dt \right)^{1/2} = \Vert y^N\Vert^2_{L^2}
\end{split}
\end{equation*}
since $e^{it\Lambda}$ is an isometry in $L^2(\Omega)$. 
Therefore $r_N=\operatorname{o}(1)$ as $N\to +\infty$.

We have proved that $\langle \bar A_T(a) y, \phi_l \rangle \rightarrow y_l \int_\Omega \phi_l^2 dv_g(x)$ as $T\rightarrow \pm \infty$ and then \eqref{wc} is true. It follows that  $\bar A_T(a) y \rightharpoonup \bar A_\infty(a) y$ for the weak topology of $L^2(\Om)$. 

Let us now write $y=y_N+y^N$ with $y_N = \sum_{j\leq N} y_j \phi_j$ and $y^N = \sum_{j>N} y_j \phi_j$. By compactness for frequencies lower than or equal to $N$, we have $\bar A_T(a) y_N \rightarrow \bar A_\infty(a) y_N$ for the strong topology of $L^2(\Omega)$. Besides, noting that $\Vert \bar A_T(a)\Vert\leq 1$, we have $\Vert \bar A_T(a)y^N\Vert\leq \Vert y^N\Vert$, and since $\Vert y^N\Vert$ can be made arbitrarily small by taking $N$ large, the result follows.

The same argument allows to prove that $\bar B_T(a) y$ tends to $0$ when $T \to \pm \infty$.  
\end{proof}

\begin{lemma} \label{convUG}
Under $(UG)$, $\bar A_T(a)$ converges uniformly (i.e., in operator norm) to $\bar A_\infty(a)$ as $T\rightarrow\pm\infty$.
\end{lemma}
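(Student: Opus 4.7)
The plan is to exploit the explicit expansion of $\bar A_T(a)$ from Lemma \ref{explicit} together with a time integration-by-parts identity. Because $f_T(0)=1$, the $\lambda_j=\lambda_l$ contributions already reassemble $\bar A_\infty(a)$ as in Lemma \ref{convAB}, so
\[
\bar A_T(a) - \bar A_\infty(a) = \sum_{\lambda_j\neq \lambda_l} f_T(\lambda_j-\lambda_l)\, P_j a P_l.
\]
The gap assumption $(UG)$ gives the pointwise bound $|f_T(\lambda_j-\lambda_l)|\leq 2/(T\gamma)$ for each term in the sum, but this alone is not enough to control the $L^2$-operator norm.

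I would then introduce the auxiliary (formal) operator
\[
S := \sum_{\lambda_j\neq \lambda_l}\frac{1}{\lambda_j-\lambda_l}\, P_j a P_l,
\]
which formally solves $[\Lambda,S]=a-\bar A_\infty(a)$. Writing $f_T(x)=(e^{iTx}-1)/(iTx)$ and using the spectral identity $e^{iT\Lambda} P_j a P_l e^{-iT\Lambda} = e^{iT(\lambda_j-\lambda_l)} P_j a P_l$, a direct computation produces the telescoping formula
\[
\bar A_T(a) - \bar A_\infty(a) = \frac{1}{iT}\bigl(e^{iT\Lambda}\, S\, e^{-iT\Lambda} - S\bigr).
\]
Since $e^{\pm iT\Lambda}$ are unitary on $L^2(\Omega)$, this immediately yields
\[
\bigl\| \bar A_T(a) - \bar A_\infty(a) \bigr\|_{\mathcal{L}(L^2)} \leq \frac{2\|S\|_{\mathcal{L}(L^2)}}{|T|} \xrightarrow[|T|\to\infty]{} 0,
\]
which is exactly the uniform convergence required by the lemma, as soon as $S$ defines a bounded operator on $L^2(\Omega)$.

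The main obstacle, and the step where $(UG)$ is really used, is thus to verify that $\|S\|_{\mathcal{L}(L^2)}<\infty$. I would realize $S$ through a functional-calculus argument: pick a smooth bounded function $g\colon \R\to \R$ such that $g(0)=0$, $g(x)=1/x$ for every $|x|\geq\gamma$, and $\hat g\in L^1(\R)$. Under $(UG)$ all nonzero eigenvalue differences lie in $\{|x|\geq \gamma\}$, so the matrix elements of $S$ coincide with those of $\frac{1}{2\pi}\int_\R \hat g(\tau)\, e^{-i\tau\Lambda}\, a\, e^{i\tau\Lambda}\, d\tau$, and the triangle inequality combined with $\|e^{-i\tau\Lambda} a e^{i\tau\Lambda}\|_{\mathcal{L}(L^2)}=\|a\|_{L^\infty}$ gives $\|S\|_{\mathcal{L}(L^2)}\leq \|a\|_{L^\infty}\|\hat g\|_{L^1}/(2\pi)$. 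The delicate point in constructing such a $g$ is that $1/x$ decays only like $1/|x|$ at infinity, making it nontrivial to ensure $\hat g\in L^1$; the freedom afforded by $(UG)$ inside $(-\gamma,\gamma)$, together with the discreteness of the spectrum, is what makes an admissible smooth interpolation possible.
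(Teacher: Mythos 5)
Your argument is correct, and it takes a genuinely different route from the paper. The paper also isolates the off-diagonal part $\sum_{\lambda_j\neq\lambda_l} f_T(\lambda_j-\lambda_l)P_jaP_l$ and uses $|f_T(x)|\leq 2/(T|x|)$, but then controls the resulting bilinear form $\sum_{\lambda_j\neq\lambda_l}\frac{y_j\bar z_l}{\lambda_j-\lambda_l}\langle a\phi_j,\phi_l\rangle$ by the Montgomery--Vaughan generalization of Hilbert's inequality under the gap hypothesis. Your telescoping identity $\bar A_T(a)-\bar A_\infty(a)=\frac{1}{iT}\bigl(e^{iT\Lambda}Se^{-iT\Lambda}-S\bigr)$ checks out (each block picks up the factor $\frac{e^{iT(\lambda_j-\lambda_l)}-1}{iT(\lambda_j-\lambda_l)}=f_T(\lambda_j-\lambda_l)$), and it reduces everything to the boundedness of $S$, which you obtain by realizing $S$ as $\frac{1}{2\pi}\int\hat g(\tau)e^{-i\tau\Lambda}ae^{i\tau\Lambda}\,d\tau$ for a multiplier $g$ vanishing at $0$ and equal to $1/x$ on $\{|x|\geq\gamma\}$. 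The one step you flag as delicate is in fact routine: any smooth bounded $g$ with $g(x)=1/x$ for $|x|\geq\gamma$ satisfies $g,g'\in L^2(\R)$, so $g\in H^1(\R)$ and hence $\hat g\in L^1(\R)$ by Cauchy--Schwarz against $(1+\xi^2)^{-1/2}$; the slow decay of $1/x$ is irrelevant since only square-integrability is needed. Comparing the two: your version yields an explicit rate $\|\bar A_T(a)-\bar A_\infty(a)\|\leq \frac{\|a\|_{L^\infty}\|\hat g\|_{L^1}}{\pi |T|}$ and never separates the coefficients $\langle a\phi_j,\phi_l\rangle$ from the projectors, which sidesteps a genuine subtlety in the paper's write-up (Montgomery--Vaughan bounds the \emph{signed} bilinear form, whereas the paper passes to absolute values term by term, and the absolute-value version of Hilbert's inequality fails); the paper's version is shorter and makes the role of the arithmetic gap condition more visible through the classical inequality. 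Both arguments use $(UG)$ in the same essential place, namely to keep the nonzero eigenvalue differences uniformly away from $0$.
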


% Note that $f_T$ converges pointwise to $\chi_{\{0\}}$ as $T\rightarrow +\infty$ (we take $2\pi=1$...). This means that the coefficient $(\mu,\lambda)$ of the Gramian operator $\bar A_T(a)$, given by $\langle \bar A_T(a) \phi_\mu,\phi_\lambda\rangle = f_T(\lambda-\mu) \langle A\phi_\mu,\phi_\lambda\rangle = f_T(\lambda-\mu) \int_\Omega a\phi_\mu\phi_\lambda$, converges to $\delta_{\lambda\mu}\langle A\phi_\lambda,\phi_\lambda\rangle = \delta_{\lambda\mu}\int_\Omega a\phi_\lambda^2$ (with the Kronecker symbol). In other words, the Gramian operator $\bar A_T(a)$ converges weakly (i.e., coefficient by coefficient) to the diagonal operator $\bar A_\infty(a)$.
\begin{proof}
% We recall that, given a family of quadratic forms $q_T(y,z) = \langle A_Ty,z\rangle$, indexed by $T$, with $(A_T)_{j,k} = q_T(\phi_j,\phi_k)$ in a Hilbert basis, the strong convergence of $q_T$ to $q_\infty$ coincides with the strong convergence of $A_T$ to $A_\infty$. Indeed, $q_T$ continuous means that $\vert q_T(y,z)\vert \leq \Vert q_T\Vert\Vert y\Vert\Vert z\Vert$, and the strong convergence of $q_T$ to $q_\infty$ means that
% $$
% \lim_{T\rightarrow +\infty} \Vert q_T-q_\infty\Vert = \lim_{T\rightarrow +\infty}  \sup_{\Vert y\Vert=1, \Vert z\Vert=1} \vert q_T(y,z)-q_\infty(y,z)\vert = 0 ,
% $$
% and besides we have
% $$
% \Vert A_T-A_\infty\Vert = \sup_{\Vert y\Vert=1} \Vert A_Ty-A_\infty y\Vert  = \sup_{\Vert y\Vert=1, \Vert z\Vert=1} \vert \langle A_Ty-A_\infty y,z\rangle\vert = \Vert q_T-q_\infty\Vert.
% $$
% 
% 
% \noindent This ends the proof of first statement of Lemma \ref{convAt}. Let us assume now that the Spectral Gap assumption $(UG)$ is true. According to the above remark,
It suffices to prove that
$$
\lim_{T\rightarrow +\infty} \sup_{ \sum_j |y_j|^2=\sum_l |z_l|^2=1} \sum_{j\neq\l} f_T(\la_j-\la_l) \langle a\phi_j,\phi_l\rangle y_l z_l  = 0.
$$

Since $\vert f_T(\la_j-\la_l)\vert\leq \frac{2}{T\vert\la_j-\la_l\vert}$, we have
$$
\left\vert \sum_{j\neq\l} f_T(\lambda_j-\lambda_j) \langle a\phi_j,\phi_l\rangle y_j z_l \right| \vert
\leq \frac{2}{T} \sum_{j\neq\l} \frac{\vert y_j\vert \vert z_l \vert}{\vert\la_j-\la_l\vert} \leq \frac{\mathrm{C}}{T} ,
$$
as a consequence of Montgomery-Vaughan's inequality (recalled below) and where $C>0$ is independent of $(y_j)_{j\in \N}$, $(z_l)_{l\in \N}$, $(\phi_j)_{j\in \N}$, $(\phi_l)_{l\in \N}$.  The result follows.
\end{proof}

The well known Hilbert inequality states that
$$
\left\vert \sum_{j\neq k} \frac{a_jÊ\bar b_k}{j-k} \right\vert^2  \leq \pi^2 \sum_{j=1}^{+\infty} \vert a_j\vert^2 \sum_{j=1}^{+\infty} \vert b_j\vert^2 \qquad \forall (a_j)_{j\in\N},(b_j)_{j\in\N}\in\ell^2(\C).
$$
The same statement holds true with $j-k$ replaced with $j+k$.
A generalization by Montgomery and Vaughan in \cite{MV} states that, given $\lambda_1 < \cdots < \lambda_j < \cdots$ with $\lambda_{j+1}-\lambda_j\geq \delta >0$ for every $j$ (uniform gap), one has
$$
\left\vert \sum_{j\neq k} \frac{a_jÊ\bar b_k}{\lambda_j-\lambda_k} \right\vert^2  \leq \frac{\pi^2}{\delta^2} \sum_{j=1}^{+\infty} \vert a_j\vert^2 \sum_{j=1}^{+\infty} \vert b_j\vert^2 \qquad \forall (a_j)_{j\in\N},(b_j)_{j\in\N}\in\ell^2(\C).
$$
%The following slight generalization is immediate: 
%
%Let $(\lambda_j)_{j\in\N^*}$ be a sequence of real numbers for which there exists $\ell>0$ and $m\in\N^*$ such that the intersection of $\{ \lambda_j\ \mid\ j\in\N^*\}$ with any interval of length $\ell$ has at most $m$ elements (note that this property differs from uniform local finiteness). Setting $f(s)=\min(1/s,\beta)$ for some arbitrary $\beta>0$, there exists $C>0$ such that
%$$
%\left\vert \sum_{j,k}  f(\lambda_j-\lambda_k) a_j \bar a_k \right\vert^2 \leq C \sum_{j=1}^{+\infty} \vert a_j\vert^2 \sum_{j=1}^{+\infty} \vert b_j\vert^2 \qquad \forall (a_j)_{j\in\N},(b_j)_{j\in\N}\in\ell^2(\C).
%$$

\section{Concluding remarks and perspectives}\label{sec:conclusion}

We provide here a list of open problems and issues.

\paragraph{Manifolds with boundary.} 
The introduction of the so-called {\it high-frequency observability constant $\alpha^T(\om)$} is of interest because of the equivalence $C_T(\om)>0\Leftrightarrow\alpha^T(\om)>0$ stated in Corollary \ref{obser_carac}. It is still true on a manifold with boundary. But then extending Theorem \ref{computation} and Corollary \ref{obser_carac_explicit} to manifolds with boundary raises difficulties.

\paragraph{Schr\"odinger equation.} 
It is known that GCC implies internal observability of the Schr\"odinger equation (see \cite{Lebeau_JMPA1992}), but this sufficient condition is not sharp (see \cite{Jaffard}). Until now a necessary and sufficient condition for observability is still not known (see \cite{laurent}). We think that some of the approaches developed in this paper, combined with microlocal issues, may serve to address this problem.

\paragraph{Shape optimization.} 
A challenging problem is to maximize the functional $\om \mapsto C_T(\om)$ over the set of all possible measurable subsets of $\Omega$ of measure $|\omega|=L|\Omega|$ for some fixed $L\in (0,1)$. In \cite{PTZObs1,PTZobsND}, the maximization of the \emph{randomized} observability constant has been considered, that is, the functional $\om\mapsto g_1(om)$. Maximizing the functional $\om\mapsto g_2(\om)$ is an interesting open problem which, thanks to Corollary \ref{cormain}, would be a step towards the maximization of the \emph{deterministic} observability constant.

\end{document}